\documentclass[11pt,letterpaper]{amsart}
\usepackage{amssymb}
\usepackage{mathrsfs, tikz}
\usepackage[all,cmtip]{xy}
\usepackage{pstricks}

\usepackage{ulem}
\usepackage{comment}

\usepackage{graphpap,color}

\definecolor{cof}{RGB}{219,144,71}
\definecolor{pur}{RGB}{186,146,162}
\definecolor{greeo}{RGB}{91,173,69}
\definecolor{greet}{RGB}{52,111,72}

\numberwithin{equation}{section}

\newtheorem{thm}{Theorem}[section]
\newtheorem{defn}[thm]{Definition}
\newtheorem{prop}[thm]{Proposition}
\newtheorem{prop-defn}[thm]{Proposition-Definition}

\newtheorem{lemma}[thm]{Lemma}
\newtheorem{cor}[thm]{Corollary}

\newcommand{\PGL}{{\rm PGL}}

\newcommand{\Id}{{\rm Id}}
\newcommand{\Gr}{{\rm Gr}}

\newcommand{\CD}{\xymatrix@R=1pc@C=1pc}
\newcommand{\CDR}{\xymatrix@R=1pc}
\newcommand{\CDC}{\xymatrix@C=1pc}

\def\cB{{\mathcal B}}

\def\cD{{\mathcal D}}

\def\cE{{\mathcal E}}

\def\cL{{\mathcal L}}

\def\cO{{\mathcal O}}

\def\cV{{\mathcal V}}

\def\cZ{{\mathcal Z}}

\def\sF{{\mathscr F}}

\def\sR{\mathscr{R}}
\def\sV{\mathscr{V}}
\def\tsV{\widetilde{\mathscr{V}}}

\def\fG{\mathfrak{G}}

\def\fL{\mathfrak{L}}

\def\fV{\mathfrak{V}}

\def\fr{\mathfrak{r}}

\def\fr{\mathfrak{r}}
\def\fs{\mathfrak{s}}

\def\NN{{\mathbb N}}
\def\PP{{\mathbb P}}

\def\GG{{\mathbb G}}

\def\ZZ{{\mathbb Z}}

\def\AA{{\mathbb A}}

\def\Ga{{\Gamma}}
\def\tGa{{\widetilde{\Gamma}}}

\def\vt{{\varTheta}}

\def\vt{{\vartheta}}

\def\si{{\sigma}}

\def\var{{\rm Var}}

\def\lex{{\rm lex}}

\def\zero{{=0}}
\def\one{{=1}}

\def\tZ{{\widetilde Z}}

\def\lra{\longrightarrow}

\def\kk{{\bf k}}

\def\bp{{\bf p}}

\def\bU{{\bf U}}

\def\bz{{\bf z}}

\def\um{{\underbar {{\mu}}}}
\def\um{{\underbar {\it m}}}
\def\uu{{\underbar {\it u}}}
\def\uv{{\underbar {\it v}}}

\def\pl{{\hbox{Pl\"ucker}}}

 \def\2{{\rm I\!I}}

\def\-{{\setminus}}

\def\vp{{\varpi}}
\def\vr{{\varrho}}

\def\vi{{\varphi}}

\def\cBkd1{{\cB_{[k]}^{d_\vr=1}}}
\def\cBd1{{\cB^{d_\vr=1}}}

\def\modcBkd1{{\mod \; \cB_{[k]}^{d_\vr=1}}}

\def\Jac{{\rm Jac}}

\def\sgn{{\rm sgn}}

\def\rk{{\rm rank \;}}

\def\ngv{{\rm ngv}}

\def\ngvr{{\rm ngv1}}
\def\ngvh{{\rm ngv2}}

\def\gov{{\rm gov}}

\def\mh{{\hbox{\scriptsize ${\rm mh}$}}}

\def\de{\delta}

\def\tsR{{\widetilde{\sR}}}

\def\up{{\Upsilon}}

\def\ud{{\underbar  d}}

\makeatletter
\newcommand*\bigcdot{\mathpalette\bigcdot@{.7}}
\newcommand*\bigcdot@[2]{\mathbin{\vcenter{\hbox{\scalebox{#2}{$\m@th#1\bullet$}}}}}
\makeatother

\makeatletter
\def\@tocline#1#2#3#4#5#6#7{\relax
  \ifnum #1>\c@tocdepth % then omit
  \else
    \par \addpenalty\@secpenalty\addvspace{#2}%
    \begingroup \hyphenpenalty\@M
    \@ifempty{#4}{%
      \@tempdima\csname r@tocindent\number#1\endcsname\relax
    }{%
      \@tempdima#4\relax
    }%
    \parindent\z@ \leftskip#3\relax \advance\leftskip\@tempdima\relax
    \rightskip\@pnumwidth plus4em \parfillskip-\@pnumwidth
    #5\leavevmode\hskip-\@tempdima
      \ifcase #1
       \or\or \hskip 1em \or \hskip 2em \else \hskip 3em \fi%
      #6\nobreak\relax
    \hfill\hbox to\@pnumwidth{\@tocpagenum{#7}}\par% <---- \dotfill -> \hfill
    \nobreak
    \endgroup
  \fi}
\makeatother

\begin{document}

\title[Universal Equations and Characteristic-free Resolution]
{Introduction to Universal Equations and Characteristic-free Resolution}

\date{}
\author{Yi Hu}
%\address{Department of Mathematics, University of Arizona, USA.}
%\email{yhu@math.arizona.edu}

\maketitle

\begin{abstract} 
Recently, we announced a proof of a characteristic-free resolution of singularities. This is a survey article on the proof. We motivate the approach, 
 describe the new ideas, and introduce the proof in the  work.
 \end{abstract}

\maketitle

%\tableofcontents

%\section{Introduction}
\medskip

When an arbitrary singularity  over $\ZZ$
 is randomly placed in an  affine space, its defining equations
are necessarily arbitrary. Traditionally, 
it has been believed that there is no standard embedding
with standardized relations. 
We show that this belief is false by exhibiting a standard embedding  of every singularity
and,  more importantly, prove that a birational transform of any singularity admits an embedding 
whose defining relations consist of only {\it square-free
binomials and linear relations}. Equally importantly, these  simple(st) relations are 
{\it structurally well organized},  and  {\it universal} for all singularities. 

Due to the above universality and structural simplicity, 
we  prove that  for any singular integral affine variety $X$,
there exist a smooth scheme $\widetilde{Y}$ and a projective birational   morphism from
$\widetilde{Y}$ onto $Y$, followed by a  smooth morphism 
from $Y$ onto $X$.  

In our approach, we neither restrict to any  singular variety nor fix the characteristic. 
Instead, we construct a  {\it universal blowup process} that {\it simultaneously} resolves all possible singularities, and,
 our method is entirely characteristic-free.

There are two closely related themes in our approach. First,
all singularities admit structurally well organized 
universal equations. Second, we can  compute Jacobian matrices of  their proper transforms under our carefully constructed universal blowups.

This introduction aims to make the idea and approach of \cite{Hu2025a}  
transparent. Once it is understood, 
checking  its details is routine.\footnote{If you are reading
this work and would like to get in touch, feel free to reach out to me.}

\section{Lafforgue's version of Mn\"ev's universality theorem}

%Consider an arbitrary affine variety $X$  in $\AA^k_\ZZ$.
\begin{comment}
defined by a set of polynomial relations $\langle f_1, \cdots, f_r \rangle$.
arbitrary in that every $f_i$ is in arbitrary form, and given $f_i$, there are no (combinatorial) patterns
to predict the next polynomial $f_{i+1}$. The smoothness of $X$ over a field, despite the existence of various criteria, is tested by the Jacobian matrix ${\rm Jac}  (f_1, \cdots, f_r)$. 
Over $p>0$, there is no reason
$x^p$ does not occur in $f_i$, and there is no reason  blowups can eliminate $x^p$ in its final local equations $(g_1, \cdots, g_{r'})$, thereby there is no basis to conclude 
${\rm Jac} (g_1, \cdots, g_{r'})$  being full rank everywhere if one's process starts with
the arbitrary relations $\langle f_1, \cdots, f_r \rangle$. 
\end{comment}
There seem two traditional believes: %\begin{itemize} \item
 (1) to describe an arbitrary singularity, one needs a set of arbitrary 
polynomial relations. % $\langle f_1, \cdots, f_r \rangle$;
 (2) to resolve singularities, one needs to focus on a  variety, introduce bounded invariants,
and then blow up to strictly improve these invariants. %\end{itemize}
We show  that 
\begin{itemize}
\item  (1) is false and
\item   (2) is unnecessary (perhaps themselves obstacles over $p>0$).
\end{itemize}

\subsection{Atomic equations for affine varieties} $\ $

Given an  affine variety $X/\ZZ$  in $\AA^k$, 
defined by relations $\langle f_1, \cdots, f_r \rangle$,
one can decode every $f_i$ into a set of simplest relations of four types in some $\AA^m$
 \begin{equation}\label{atom} %\xymatrix{
X_\gamma=X_\alpha X_\beta \\
%X_\gamma=X_\alpha + X_\beta \\
%X_\gamma=X_\alpha+1}.
\end{equation}
$$X_{\gamma'}=X_{\alpha'}+ X_{\beta'} $$
$$X_{\gamma''}=X_{\alpha''}+1$$
$$X_a=X_b.$$
The process can be understood from the following example of \cite{LV12}
$${{\ZZ[x_1,x_2,x_3]}
\over {x_1x_2+x_3^2-2}}={{\ZZ[x_1,x_2,x_3,x_4,x_5,x_6,x_7,x_8,x_9,x_{10}] }\over 
{x_4-x_1x_2,x_5-x_3, x_6-x_3x_5, x_7-x_4-x_6, x_8-1,x_9-1, x_{10}-x_8-x_9, x_7-x_{10}}}.
$$
We call \eqref{atom} the atomic equations of $X$.\footnote{Though each  atomic equation is  simple,
 altogether they are random with no structures. This is not of much use unless
we find {\it molecular equations} for $X$,
i.e., simple equations formed together with structures.}

\subsection{Singularities as configuration spaces} $\ $

Consider $X \subset \AA^m$  defined by atomic equations of \eqref{atom}.
We can place all variables $X_\alpha$ as points in $\PP^2$,
subject to  position constraints according to these relations.
To do this, we choose a fixed point $0 \in \PP^2$ and a line $\ell_\infty$.
For any variable $X_\alpha$, we choose a general free point $\infty_\alpha \in \ell_\infty$ and
 $1_\alpha$ in the line $\overline{0\infty_\alpha}$ connecting $0$ and $\infty_\alpha$. We then  can put $X_\alpha$ in the line  $\overline{0\infty_\alpha}$, using $M_{0,4}$ or cross-ratio $[0,1_\alpha, \infty_\alpha, X_\alpha]$.
For any equation $e$ of the first three types in \eqref{atom}, say,
$X_\gamma=X_\alpha + X_\beta$,  we choose a general free point $\infty_e \in \ell_\infty$,
$1_e \in \overline{0\infty_e}$,
and  let $X_e \in \overline{0\infty_e}$ be determined by Figure 1.5 of \cite{La03}.
Similar treatments for $X_{\gamma'}=X_{\alpha'}+ X_{\beta'} $ (Figure 1.6 of \cite{La03}) and
$X_{\gamma''}=X_{\alpha''}+1$  (Figure 1.7 of \cite{La03}).
This  establishes a relation between $X$ and a configuration space $C_{\ud}^{3,n}$
of $n$ points in $\PP^2$, subject to position constrains  by a matroid $\ud$,
where the matroid $\ud$ here is a rank function 
$$r: 2^{[n]} \lra \NN$$ 
where $[n]=\{1,\cdots, n\}$, satisfying $r(\emptyset)=0$, $r([n])=3$, and for any $I, J \subset [n]$
\begin{enumerate}
\item $r(I) \le |I|$;
\item $r(I \cup J)+ r(I \cap J) \le r(I) + r(J)$;
\item $r (I \cup \{x\}) \le r(I) +1$.
\end{enumerate}
This is called a matroid of rank 3 on the set $[n]$.
Under this terminology, 
$$C^{3,n}_\ud =\{ \bp=(p_1, \cdots, p_n) \in (\PP^2)^n \mid  r_\bp =r\},$$ where
$r_\bp (I)=\dim {\rm span}\{v_i \mid i \in I\}, \forall \; I \subset [n]$, 
with $v_i$ being a lift of $p_i$ in the vector space.

\begin{thm}{\rm (Lafforgue's version of Mn\"ev's universality theorem, I)}
Given any affine variety $X/\ZZ$, there exist integers $n$, $r$,  a matroid $\ud$
of rank 3 on the set $[n]$, and the following diagram
$$ \xymatrix{
C_\ud^{3,n} \ar[r] \ar[rrd]  &  C_\ud^{3,n}/\PGL_3  \cong U  \ar[rd] \ar @{^{(}->}[r]   &X \times \AA^r \ar[d] \\
& & X
}
$$
such that $\PGL_3$ acts freely on $C_\ud^{3,n}$, $U$ is open in $X \times \AA^r$,
and the composition morphism $U \lra X$ is onto. In particular, the surjective morphism
$C_\ud^{3,n} \lra X$ is smooth.
\end{thm}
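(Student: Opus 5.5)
We will follow Lafforgue's construction in \cite{La03}. We first reduce $X$ to atomic form as in \eqref{atom}: $X\subset\AA^m$ is cut out by relations $X_\gamma=X_\alpha X_\beta$, $X_{\gamma'}=X_{\alpha'}+X_{\beta'}$, $X_{\gamma''}=X_{\alpha''}+1$ and $X_a=X_b$. The passage from $\langle f_1,\dots,f_r\rangle$ to these relations introduces auxiliary variables, and each of them is a polynomial in the original coordinates. So the projection $\AA^m\to\AA^k$ restricts to an isomorphism from the atomic variety onto $X$, and we may work with the atomic presentation.

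\emph{Building the matroid.} We fix four points in general position: $0$, and three points of $\ell_\infty$, which together form a projective frame. For each variable $X_\alpha$ we adjoin the points $\infty_\alpha\in\ell_\infty$, $1_\alpha\in\overline{0\infty_\alpha}$ and $X_\alpha\in\overline{0\infty_\alpha}$. For each equation $e$ of the first three types we adjoin the auxiliary points of Figures 1.5--1.7 of \cite{La03}; these realize $+$, $\times$ and $+1$ through intersections of lines, using the cross-ratio coordinate on each line $\overline{0\infty}$. Relations $X_a=X_b$ are imposed by a perspectivity through an extra point, or simply by identifying the two variables. The matroid $\ud$ records exactly which triples are collinear, which lines meet $\ell_\infty$ at prescribed points, and that all other triples are non-collinear. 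The number $n$ is the total number of points. The integer $r$ counts the free parameters: the positions of the $\infty_\alpha,\infty_e$ on $\ell_\infty$, the points $1_\alpha,1_e$ on their lines, and the free auxiliary points in the figures, all modulo the frame.

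\emph{Identifying the quotient.} Every configuration in $C^{3,n}_\ud$ contains the four frame points in general position. Since $\PGL_3$ acts simply transitively on ordered frames, the action is free, and normalizing the frame gives a section. Hence $C^{3,n}_\ud\cong \PGL_3\times U$ with $U=C^{3,n}_\ud/\PGL_3$, a Zariski-locally trivial principal bundle. On the normalized slice every point is determined by the coordinates $X_\alpha$ and the free parameters. The construction of Figures 1.5--1.7 shows that the collinearity conditions are equivalent to the atomic equations, so they are equivalent to $x\in X$. The remaining conditions of the form ``rank exactly $r_\ud(I)$'' are non-vanishing of certain minors in the coordinates, so they define an open subset $U\subset X\times\AA^r$.

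\emph{Main obstacle.} The delicate step is to show that the composition $U\to X$ is onto. For every $x\in X$, including degenerate ones where some $X_\alpha\in\{0,1\}$ or two coordinates coincide, there must be a choice of free parameters making all the required non-collinearities hold. The plan is to use generic position: for fixed $x$, each unwanted coincidence is a proper closed condition on $\AA^r$. To ensure that it is proper, we arrange in the design of $\ud$ that every point encoding a value has its own direction $\infty_\alpha$ and its own unit $1_\alpha$. Coincidences of values then never force coincidences of points. This has to hold over every residue field, including finite ones, and that is why we state surjectivity on points rather than density of rational points. Smoothness then follows: $U\to X$ is the restriction of the projection $X\times\AA^r\to X$ to an open subset, hence smooth, and $C^{3,n}_\ud\to U$ is a $\PGL_3$-torsor, hence smooth. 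So the composite $C^{3,n}_\ud\to X$ is smooth and surjective.
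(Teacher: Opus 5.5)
Your outline uses the same construction the paper relies on; the paper itself only sketches Lafforgue's encoding and defers to his book. However, the step you correctly flag as the main obstacle does not work as you propose.

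\textbf{The surjectivity gap.} You define $X_\alpha$ by $[0,1_\alpha,\infty_\alpha,X_\alpha]=x_\alpha$. At a point of $X$ with $x_\alpha=0$ this forces $X_\alpha=0$, and with $x_\alpha=1$ it forces $X_\alpha=1_\alpha$, for \emph{every} choice of the free parameters. Giving each variable its own $\infty_\alpha$ and $1_\alpha$ only keeps $X_\alpha$ and $X_\beta$ apart when their common value is nonzero. So the conditions $r(\{0,X_\alpha\})=2$ and $r(\{1_\alpha,X_\alpha\})=2$ do not cut out proper closed subsets of the fiber $\AA^r_{\kappa(x)}$; they fail on the whole fiber. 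Since $\ud$ is a single fixed matroid, either these points of $X$ or the generic ones are missed. Already for $X=\AA^1$ presented by one variable, $U\to X$ misses $x=0$ and $x=1$. The same degeneration occurs inside the gadgets of Figures 1.5--1.7 when an input is $0$ or $1$, and there the collinearities also stop encoding the operation.

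What is missing is a reparametrization (``scattering'') step before the configuration is built:
\begin{enumerate}
\item Enlarge $\AA^r$ by fresh free parameters $t$.
\item Replace the atomic presentation of $X$ by one of an open part of $X\times\AA^{r'}$: put $y_\alpha=x_\alpha+t_\alpha$, and rewrite every relation in the $y$'s and $t$'s, introducing new parameters at each step.
\item Arrange this so that every quantity actually placed as a point is, over every $x\in X$, a polynomial in the $t$'s that is not identically $0$, not identically $1$, and not identically equal to any quantity a gadget needs it distinct from.
\end{enumerate}
Only then is each unwanted coincidence a proper closed subset of every fiber. The fibers of $U\to X$ are then nonempty opens of $\AA^r_{\kappa(x)}$ over any residue field, which gives surjectivity.

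\textbf{The frame slip.} The freeness argument also needs repair. The point $0$ together with three points of $\ell_\infty$ is not a projective frame, because those three points are collinear. Its stabilizer in $\PGL_3$ is the $\GG_m$ of homologies with center $0$ and axis $\ell_\infty$. So normalizing these four points gives neither freeness nor a section, and your value of $r$ is off by one. Instead, normalize a configuration with trivial stabilizer that occurs in every member of $C^{3,n}_\ud$: for example $0$, three points of $\ell_\infty$, and one unit point $1_\alpha\in\overline{0\infty_\alpha}$. $\PGL_3$ acts simply transitively on such configurations. Then $C^{3,n}_\ud\cong\PGL_3\times U$, and your smoothness argument goes through.
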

Here, the  free $\PGL_3$-action removes the choice of $0$ and $\ell_\infty$, and
$r$ counts the number of auxiliary  free points 
$(1_\alpha, \infty_\alpha, 1_e, \ldots)$. 
%The reason $U$ is open because the auxiliary free points are required to be distinct, for example.

\subsection{Singularities as strata of  Grassmannians} $\ $

To connect with Grassmannian,  we write points $p_1, \cdots, p_n$ of $\PP^2$
as  matrices
$$\left\{
\left(
\begin{array}{cccccccccc}
x_1 & x_2 & \cdots & x_n \\
y_1 & y_2 & \cdots & y_n \\
z_1 & z_2 & \cdots & z_n 
\end{array}
\right) \right\}
$$
The group ${\rm GL}_3$ acts from the left and $\GG_m^n$ acts  coordinate-wise.
If we divide $\GG_m^n$  first, we obtain $(\PP^2)^n$ with the residual $\PGL_3$ action;
if we divide ${\rm GL}_3$ action first, we obtain the Grassmannian $\Gr^{3,n}$ of 3-dimensional subspaces in  $n$-dimensional vector space with the residual 
 $\GG_m^n/\GG_m$ action. This gives rise to  the equality of two orbit spaces
$$[(\PP^2)^n/\PGL_3] = [\Gr^{3,n}/(\GG_m^n/\GG_m)],$$
called the Gelfand-MacPherson correspondence. Using it, we obtain
$$  C_\ud^{3,n}/\PGL_3  = \Gr^{3,n}_\ud/(\GG_m^n/\GG_m), \;\; \hbox{where}$$
 $$\Gr^{3,n}_\ud =\{E \in \Gr^{3,n} \mid \dim E \cap E_I = r(I), \; \forall \; I \in [n]\}$$
where $E_I$ is the coordinate subspace spanned by the basis vectors $e_i, i \in I$.

\begin{thm}{\rm (Lafforgue's version of Mn\"ev's universality theorem, II)}
Given any  affine variety $X/\ZZ$, there exist integers $n$, $r$,  a matroid $\ud$
of rank 3 on the set $[n]$,  and the following diagram
$$ \xymatrix{
\Gr_\ud^{3,n} \ar[r] \ar[rrd]  &  \Gr_\ud^{3,n}/(\GG_m^n/\GG_m)  \cong U  \ar[rd] \ar @{^{(}->}[r]   &X \times \AA^r \ar[d] \\
& & X
}
$$
such that $(\GG_m^n/\GG_m)$ acts freely on $\Gr_\ud^{3,n}$, $U$ is open in $X \times \AA^r$,
and the  morphism $U \lra X$ is onto. In particular, the surjective morphism
$\Gr_\ud^{3,n} \lra X$ is smooth.
\end{thm}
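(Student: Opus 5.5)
The plan is to deduce version II from version I (the preceding theorem) via the Gelfand--MacPherson correspondence, using the same integers $n$, $r$ and the same matroid $\ud$. First I will make the correspondence precise on the level of the relevant open subsets rather than only on orbit stacks. Let $M^\circ_\ud \subset \AA^{3n}$ be the locally closed subscheme of $3\times n$ matrices whose columns $v_1,\dots,v_n$ are all nonzero and satisfy $\dim {\rm span}\{v_i \mid i\in I\} = r(I)$ for all $I\subset[n]$. Since $r([n])=3$, every such matrix has rank $3$. Moreover, every singleton $\{i\}$ has $r(\{i\})=1$: each point $p_i$ is an actual point of $\PP^2$, and we may also assume no loops in $\ud$.

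There are two quotient maps. The map $M^\circ_\ud \to C^{3,n}_\ud$ is the quotient by the free action of $\GG_m^n$, a $\GG_m^n$-torsor. The map $M^\circ_\ud \to \Gr^{3,n}_\ud$, sending a matrix to its row space $E$, is a ${\rm GL}_3$-torsor. The matroid condition transfers correctly because, for the row space $E$ of a rank-$3$ matrix $A$, the columns indexed by $I$ have span of dimension $r(I)$ exactly when the projection $E\to \kk^I$ has that rank. This is the condition defining $\Gr^{3,n}_\ud$, written in the paper as $\dim E\cap E_I$; I will interpret it via the Plücker / rank-of-projection condition and check that it matches. Hence
\[
C^{3,n}_\ud/\PGL_3 \;\cong\; M^\circ_\ud/({\rm GL}_3\times \GG_m^n) \;\cong\; \Gr^{3,n}_\ud/(\GG_m^n/\GG_m),
\]
where the diagonal $\GG_m$ is identified on both sides.

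Next comes freeness. By version I, $\PGL_3$ acts freely on $C^{3,n}_\ud$. Then ${\rm GL}_3\times\GG_m^n$ modulo the diagonal $\GG_m$ acts freely on $M^\circ_\ud$. Indeed, if $gA\,{\rm diag}(t)=A$ then $g$ fixes every $p_i$, so $g$ is scalar in $\PGL_3$; absorbing the scalar into $t$, we get $A\,{\rm diag}(t')=A$ with all columns nonzero, which forces $t'=1$. Passing to the ${\rm GL}_3$-quotient, it follows that $\GG_m^n/\GG_m$ acts freely on $\Gr^{3,n}_\ud$. Freeness together with the torsor structures shows that $\Gr^{3,n}_\ud\to \Gr^{3,n}_\ud/(\GG_m^n/\GG_m)$ is a $\GG_m^{n-1}$-torsor, which is smooth and surjective. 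The isomorphism with $U$, the open embedding $U\hookrightarrow X\times\AA^r$, and the surjectivity of $U\to X$ are then inherited verbatim from version I.

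Finally, for smoothness, the composite $\Gr^{3,n}_\ud\to U\to X$ is a torsor followed by an open immersion followed by the projection $X\times\AA^r\to X$ restricted to $U$. Each of these is smooth, so the composite is smooth, and it is surjective since each step is surjective onto the next.

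The main obstacle is making the quotients $C^{3,n}_\ud/\PGL_3$ and $\Gr^{3,n}_\ud/(\GG_m^n/\GG_m)$ honest schemes over $\ZZ$ with torsor quotient maps, not just stacks. I would handle this by using the explicit slice from the Lafforgue construction. The points $0$, $\ell_\infty$, and the frame points normalize away $\PGL_3$, which identifies $C^{3,n}_\ud$ with $\PGL_3\times U$. Transporting this slice through $M^\circ_\ud$ gives a section of the torus action on $\Gr^{3,n}_\ud$, namely normalizing suitable Plücker coordinates to $1$. This shows $\Gr^{3,n}_\ud\cong (\GG_m^n/\GG_m)\times U$, a trivial torsor, which yields all claims at once.
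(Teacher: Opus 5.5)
Your proposal is correct and takes the paper's route. The paper gets version II from version I simply by invoking the Gelfand--MacPherson identification $C^{3,n}_\ud/\PGL_3 = \Gr^{3,n}_\ud/(\GG_m^n/\GG_m)$; your argument expands exactly that step: the two torsor structures on the space of $3\times n$ matrices, the transfer of freeness, and the explicit slice that makes the quotients honest schemes over $\ZZ$. Your reading of the stratum condition as a rank-of-projection (equivalently Pl\"ucker-support) condition is also the right one, since the paper's literal formula $\dim E\cap E_I=r(I)$ would force $e_i\in E$ for every $i$ and so cannot hold when $n>3$.
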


\subsection{Molecular equations for singularities}
Consider the $\pl$ embedding 
$$\Gr^{3,n} \lra \PP( \wedge^3 \kk^n)$$  with
the $\pl$ coordinates $[p_{ijk}]_{1 \le i<j<k \le n}$, where $\kk$ is a field.

\begin{prop}\label{pre-molecular} {\rm (Lafforgue)} Given any matroid $\ud$ of rank 3 on the set $[n]$, there
exists a subset $\Gamma$ of the set of all $\pl$ variables such that
$\Gr^{3,n}_\ud$ as a subset of $ \PP( \wedge^3 \kk^n)$ is defined by 
$$\hbox{all $\pl$ relations $F$, together with}$$
$$p_{ijk}=0, \; \forall \; p_{ijk} \in \Gamma \ \; \hbox{and} \ \; 
p_{ijk} \ne 0, \; \forall \; p_{ijk} \notin \Gamma.$$
\end{prop}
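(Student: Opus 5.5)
The plan is to take $\Gamma$ to be the set of Plücker variables indexed by the \emph{dependent} triples of the matroid $\ud$:
$$\Gamma=\{p_{ijk} \mid r(\{i,j,k\})<3\}.$$
This reduces the statement to a comparison of rank functions. For $E \in \Gr^{3,n}$, represent $E$ as the row space of a $3\times n$ matrix with columns $v_1,\dots,v_n$. Then $p_{ijk}(E)=\det(v_i,v_j,v_k)$ up to a common scalar. Hence $p_{ijk}(E)\neq 0$ if and only if $r_E(\{i,j,k\})=3$, where $r_E(I)=\dim{\rm span}\{v_i \mid i\in I\}$.

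We must match this with the definition of $\Gr^{3,n}_\ud$ through $\dim E\cap E_I$. Via the Gelfand--MacPherson dictionary of the previous subsection, the stratum condition on $E$ is exactly that the rank function of the column vectors equals $r$. The coordinate-subspace condition is the dual/complementary encoding of the same data, using $\dim E\cap E_I = 3 - \operatorname{rank}$ of the columns indexed by $[n]\setminus I$. I will adopt this convention and simply verify that the two formulations determine each other.

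With this in place, the proof has two steps.
\begin{enumerate}
\item The first inclusion is immediate. The Plücker relations $F$ cut out $\Gr^{3,n}$, and if $E\in\Gr^{3,n}_\ud$ then $r_E=r$. So $p_{ijk}(E)=0$ exactly for $p_{ijk}\in\Gamma$ and $p_{ijk}(E)\neq 0$ otherwise.
\item The converse is where the work lies. Suppose a point satisfies all Plücker relations, so it lies in $\Gr^{3,n}$, and its vanishing pattern on triples agrees with $\ud$. We must show $r_E(I)=r(I)$ for every $I\subset[n]$.
\end{enumerate}

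The key observation for the converse is that a matroid of rank 3 is determined by its set of bases. For any $I$, the rank is the maximal size of an independent subset of $I$, and independent sets are exactly the subsets of bases. Both $r_E$ and $r$ are matroid rank functions of rank 3: $r_E$ is rank 3 because some Plücker coordinate is nonzero, which holds since the point lies in projective space. Their bases are the triples $\{i,j,k\}$ with nonzero Plücker coordinate, respectively with $r(\{i,j,k\})=3$. By hypothesis these coincide, so $r_E=r$.

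The main obstacle is not the matroid combinatorics but this identification of a matroid with its basis set, applied to the \emph{realizable} matroid $r_E$. I would state the identification explicitly from the rank axioms (1)–(3): the rank of $I$ equals the largest $|J|$ with $J\subset I$ contained in a basis. This follows by extending independent sets using submodularity, i.e.\ the augmentation property.

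A secondary point is that the conditions $p_{ijk}\neq 0$ define only a locally closed subset. The statement is set-theoretic, which is all that is needed here. If a scheme-theoretic version is wanted, one works on the open set where these coordinates are invertible and takes the closed subscheme defined there by the Plücker relations together with $p_{ijk}=0$ for $p_{ijk}\in\Gamma$. The same argument applies pointwise over every field $\kk$, so the description is characteristic-free.
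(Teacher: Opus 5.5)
Your proposal is correct and is the standard argument behind this result, which the paper does not prove but cites from Lafforgue: take $\Gamma$ to be the non-bases $\{p_{ijk} \mid r(\{i,j,k\})<3\}$, recover $r_E=r$ from equality of basis sets, and read $\Gr^{3,n}_\ud$ as $\{E \mid r_E=r\}$, which is the intended stratum (the paper's literal condition $\dim E\cap E_I=r(I)$ only matches it after replacing $r(I)$ by $3-r([n]\setminus I)$). One point to repair: the identity $r(I)=\max_B |I\cap B|$ does not follow from the axioms (1)--(3) as the paper lists them, because monotonicity $r(I)\le r(I\cup\{x\})$ is missing --- for instance $r(I)=f(|I|)$ with $f=(0,1,2,3,4,4,3)$ on $[6]$ satisfies them, yet $\Gr^{3,6}_\ud=\emptyset$ while your locus with $\Gamma=\emptyset$ is the open uniform cell --- so state monotonicity explicitly when you invoke ``matroid rank function''.
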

%For an explicit expression of 
The subset $\Gamma$ is determined by the matroid $\ud$ (Proposition p4, \cite{La03}.
Proposition 9.1, \cite{Hu2025a}).

To define $\Gr^{3,n}_\ud$, not all $\pl$ relations are needed, as many are redundant.
Indeed, $\Gr^{3,n}_\ud$ is contained in some affine chart of $ \PP( \wedge^3 \kk^n)$,
up to permutation, we may assume this chart is $\bU =(p_{123}\ne 0)$ with affine coordinates
$$x_{ijk}=p_{ijk}/p_{123}, \; \forall \; ijk \ne 123.$$

\begin{thm}\label{molecular} {\rm (\cite{Hu2025a})}
 Given any matroid $\ud$ of rank 3 on the set $[n]$, up to permutation,
 we assume that $\Gr^{3,n}_\ud$ is contained in the affine chart $\bU =(p_{123}\ne 0)$.
 Then, there
exists a subset $\Gamma \subset \{x_{ijk} \mid  ijk \ne 123\}$ such that
$\Gr^{3,n}_\ud$ as a subset of $\bU$ is defined by 
\begin{eqnarray}\label{tour: all primary pl}
%\hbox{de-homogenized primary $\pl$ relations for $\bU \cap \Gr^{3,E}$:}  \\
F_{1uv}:  \; x_{1uv}-x_{12u}x_{13v} + x_{13u}x_{12v}, \nonumber
\label{rk0-1uv}\\
F_{2uv}:  \; x_{2uv}-x_{12u}x_{23v} + x_{23u}x_{12v}, \nonumber
\label{rk0-2uv} \\
F_{3uv}: \; x_{3uv}-x_{13u}x_{23v} + x_{23u}x_{13v} , \nonumber
\label{rk0-3uv}\\
F_{abc}: \; x_{abc}-x_{12a}x_{3bc} + x_{13a}x_{2bc} -x_{23a}x_{1bc}, \label{rk1-abc} \nonumber
\end{eqnarray}
where $3<u < v \le n$ and $3<a<b<c \le n$, together with
$$x_{ijk}=0, \; \forall \; x_{ijk} \in \Gamma \ \; \hbox{and} \ \; 
x_{ijk} \ne 0, \; \forall \; x_{ijk} \notin \Gamma.$$
\end{thm}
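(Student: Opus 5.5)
Combining Proposition \ref{pre-molecular} with a standard description of the big cell $\bU\cap\Gr^{3,n}$ proves the theorem. By Proposition \ref{pre-molecular}, $\Gr^{3,n}_\ud$ is cut out inside $\PP(\wedge^3\kk^n)$ by all Pl\"ucker relations together with the vanishing conditions $p_{ijk}=0$ for $p_{ijk}\in\Gamma$ and the nonvanishing conditions for the rest. Since $\Gr^{3,n}_\ud\subset\bU$, I dehomogenize by $p_{123}$ and take $\Gamma$ to be the corresponding set of $x_{ijk}$; the monomial $x_{123}=1\neq 0$, so $123\notin\Gamma$ is automatic. It then remains to show that, inside $\bU$, the ideal generated by all dehomogenized Pl\"ucker relations coincides with the ideal generated by the $F_{1uv},F_{2uv},F_{3uv},F_{abc}$. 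Equivalently, $\bU\cap\Gr^{3,n}$ is exactly the closed subscheme of $\bU$ defined by these primary relations. The subsequent vanishing and nonvanishing conditions are imposed identically on both sides, so the theorem follows.

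To identify $\bU\cap\Gr^{3,n}$ I use the big cell parametrization. A point of $\bU\cap\Gr^{3,n}$ is the row space of a unique $3\times n$ matrix $(I_3\mid A)$, where $A=(a_{r,u})_{r=1,2,3;\,3<u\le n}$ is arbitrary. Its normalized Pl\"ucker coordinates are the $3\times3$ minors. Concretely, $x_{12u}=a_{3u}$, $x_{13u}=-a_{2u}$, $x_{23u}=a_{1u}$, and the remaining coordinates $x_{iuv}$ ($i\le 3$) and $x_{abc}$ are the $2\times2$ and $3\times3$ minors of $A$. Expanding these minors along the rows, or equivalently applying the Laplace expansion of $\det$ along the first column, gives precisely the formulas $F_{iuv}=0$ and $F_{abc}=0$; for example $x_{1uv}=x_{12u}x_{13v}-x_{13u}x_{12v}$ is the $2\times2$ minor of rows $2,3$. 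I will verify the signs by a direct computation for one representative in each of the four families. This shows that the morphism $\AA^{3(n-3)}\to\bU$, $A\mapsto(\text{minors})$, is an isomorphism onto $\bU\cap\Gr^{3,n}$. Moreover, it shows that the subscheme $V(F_{\bullet})\subset\bU$ is the graph of the map expressing all $x_{iuv},x_{abc}$ as polynomials in the free coordinates $x_{12u},x_{13u},x_{23u}$, hence is also isomorphic to $\AA^{3(n-3)}$ via the same coordinates.

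The main obstacle is the scheme-theoretic comparison over $\ZZ$, rather than just on points over fields. Every Pl\"ucker relation vanishes on the image of $(I_3\mid A)$, and that image equals $V(F_\bullet)$ as shown above. Hence every dehomogenized Pl\"ucker relation lies in the ideal generated by the $F$'s: modulo the $F$'s, the coordinate ring is the polynomial ring in the free variables, which is reduced, and the relation restricts to zero on the big cell. Conversely, each $F_{\bullet}$ is itself a dehomogenized Pl\"ucker relation, namely a three-term or four-term Grassmann--Pl\"ucker relation involving $p_{123}$, so it lies in the Pl\"ucker ideal. I will check that the explicit Pl\"ucker relation
\[
p_{123}p_{abc}-p_{12a}p_{3bc}+p_{13a}p_{2bc}-p_{23a}p_{1bc}=0
\]
and its analogues for the $F_{iuv}$ indeed occur, with these signs. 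The two ideals therefore agree on $\bU$. Intersecting with the conditions coming from $\Gamma$ completes the proof.
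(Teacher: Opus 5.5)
Your proposal is correct and follows the paper's route: the paper's proof is exactly Proposition~\ref{pre-molecular} combined with the fact (Proposition 3.6 of \cite{Hu2025a}, cited rather than proved here) that $\bU\cap\Gr^{3,n}$ is cut out by the primary relations $F_{1uv},F_{2uv},F_{3uv},F_{abc}$. Your big-cell argument via $(I_3\mid A)$ gives a sound self-contained proof of that cited fact: the triangular form of the $F$'s yields $\ZZ[x_{ijk}]/(F_\bullet)\cong\ZZ[x_{12u},x_{13u},x_{23u}]$, and the signs you propose to verify do come out as stated.
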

\begin{proof}  This follows by applying Proposition \ref{pre-molecular} and
Proposition 3.6 of  \cite{Hu2025a}.
\end{proof}

\begin{defn} \label{molecular2} For any  $\Gamma \subset \{x_{ijk} \mid  ijk \ne 123\}$, we let 
$Z_\Ga \subset \bU$ be defined by 
\begin{eqnarray}\label{molecular-eqs} \nonumber
F_{1uv}, F_{2uv}, F_{3uv}, F_{abc}, \; \; \forall \;
3<u < v \le n, \; 3<a<b<c \le n, \\  \nonumber
x_{ijk}=0, \; \forall \; x_{ijk} \in \Gamma. \;\;\; \;\;\; \;\;\; \;\;\; \;\;\; \;\;\; \;\;\; \;\;\; \;\;\; \;\;\;
\end{eqnarray}
\end{defn}
Then, $\Gr^{3,n}_\ud$ is an open subset of the affine variety $Z_\Ga$. To resolve 
$\Gr^{3,n}_\ud$, it suffices to resolve $Z_\Ga$. 
We may call the above equations 
 molecular equations of singularities,  as they are simple and 
structurally well organized.

\begin{defn}\label{led-via}
Every  of $$F_{1uv}, F_{2uv}, F_{3uv}, F_{abc}, \; \;
3<u < v \le n, \; 3<a<b<c \le n$$ has, respectively, 
the leading variable\footnote{The leading variables play 
leading r\^oles in our universal blowups and decisively 
enable us to compute Jacobian matrices of  relations. 
See \S\S \ref{foc-jac} and \ref{jjj}.}
$$x_{1uv}, x_{2uv}, x_{3uv}, x_{abc}, \; \;
3<u < v \le n, \; 3<a<b<c \le n.$$
\end{defn}

\section{Universal equations for all singularities}

\subsection{The universe $\sV$ for all singularities  and universal equations} $\ $

To construct our universal resolution, we separate the terms of every  $F$ in
$$\sF=\{F_{1uv}, F_{2uv}, F_{3uv}, F_{abc} \mid 3<u < v \le n, \; 3<a<b<c \le n \}.$$ 
Precisely, we want to replace them by binomials and linearized $\pl$ relations. 

To that end, for any $F \in \sF$,  expressed as
$$F=\sum_{s \in S_F} \sgn(s) x_{\uu_s}x_{\uv_s}, \;\; \hbox{for some index set $S_F$},$$
%where $s_F \in S_F$ gives the leading term $x_{\uu_F}$ 
with $x_{123}:=1$,  we introduce the projective space
$$\PP_F, \; \hbox{equipped with the homogeneous coordinates $[x_{(\uu_s,\uv_s)}]_{s \in S_F}$}.$$

We  then introduce the rational map 
$$\Theta: \bU \cap \Gr^{3, n}  \lra  \prod_{F \in \sF} \PP_F$$
$$[x_{ijk}]_{ijk \ne 123} \to  \prod_{F \in \sF} [x_{\uu_s}x_{\uv_s}]_{s \in S_F},$$
 let $\sV$ be the closure of the graph of $\Theta$, and obtain the diagram
$$ \xymatrix{
\sV \ar[d] \ar @{^{(}->}[r]   \ar[d] \ar @{^{(}->}[r]  &
\sR:= \bU  \times \prod_{F \in \sF} \PP_F \ar[d] \\
\bU \cap \Gr^{3, n}  \ar @{^{(}->}[r]    & \bU.}
$$
The left vertical morphism is birational.

\begin{defn}
The smooth scheme $\sR$ comes equipped with two kinds of variables
\begin{itemize}
\item we call $x_\uu$ (e.g., $x_{12u}$) a $\vp$-variable;
\item  we call $x_{(\uu,\uv)}$ (e.g., $x_{(12u,13v)}$) a $\vr$-variable.
\end{itemize}
\end{defn}

\begin{thm}\label{universal-eqs} {\rm (\cite{Hu2025a})}
The scheme $\sV$ as a closed subset of $\sR$ is defined by 
\begin{itemize}
\item{\rm GL} {\rm  (Governing Linearized $\pl$ Relations)}
$$L_F: \sum_{s \in S_F} \sgn(s) x_{(\uu_s, \uv_s)}, 
\; \forall \; F =\sum_{s \in S_F} \sgn(s) x_{\uu_s}x_{\uv_s}\in \sF;$$
\item{\rm GB} {\rm (Governing Binomials)}
\\[-2em]
\begin{eqnarray}\label{GovB}\nonumber
F_{1uv}: 
x_{1uv}x_{(12u,13v)} - x_{12u}x_{13v} x_{(123,1uv)}, \; x_{1uv}x_{(13u,12v)}- x_{13u}x_{12v}x_{(123,1uv)}; \\
F_{2uv}: x_{2uv}x_{(12u,23v)} -x_{12u}x_{23v} x_{(123,2uv)}, \;  x_{2uv}x_{(23u,12v)}-x_{23u}x_{12v} x_{(123,2uv)}; \nonumber \\
F_{3uv}: x_{3uv}x_{(13u,23v)} -x_{13u}x_{23v}x_{(123,3uv)}, \;  x_{3uv}x_{(23u,13v)} -x_{23u}x_{12v}x_{(123,3uv)}; \nonumber\\
F_{abc}:  x_{abc}x_{(12a,3bc)}-x_{12a}x_{3bc} x_{(123,abc)},\;
 x_{abc}x_{(13a,2bc)}-x_{13a}x_{2bc} x_{(123,abc)},\nonumber \\
x_{abc}x_{(23a,1bc)} -x_{23a}x_{1bc}x_{(123,abc)}. \nonumber
\end{eqnarray}
\\[-3em]
\item{\rm NGB1}   {\rm (Non-governing Binomials, I)}
\\[-2em]
\begin{eqnarray}\label{GovB}\nonumber
F_{1uv}:  x_{12u}x_{13v} x_{(123,1uv)}- x_{13u}x_{12v}x_{(123,1uv)}; \\
F_{2uv}: x_{12u}x_{23v} x_{(123,2uv)} -x_{23u}x_{12v} x_{(123,2uv)}; \nonumber \\
F_{3uv}:  x_{13u}x_{23v}x_{(123,3uv)} -x_{23u}x_{12v}x_{(123,3uv)}; \nonumber\\
F_{abc}:  
x_{12a}x_{3bc}x_{3bc} x_{(123,abc)}  -x_{13a}x_{2bc} x_{(123,abc)},\nonumber \\
x_{12a}x_{3bc}x_{3bc} x_{(123,abc)} -x_{23a}x_{1bc}x_{(123,abc)} \\
x_{13a}x_{2bc} x_{(123,abc)} -x_{23a}x_{1bc}x_{(123,abc)}. \nonumber
\end{eqnarray}
\\[-3em]
\item{\rm NGB2} {\rm (Non-governing Binomials, II)}  For example, 
\\[-2em]
\begin{eqnarray}\nonumber
x_{1bc} x_{2b'c'} x_{(13a,2bc)} x_{(23a,1b'c')}-x_{2bc}  x_{1b'c'} x_{(23a,1bc)} x_{(13a,2b'c')} \\ \nonumber
x_{12b} x_{3ac}x_{(13b,2\bar b \bar c)} x_{(12 \bar a,3 \bar b \bar c)} x_{(13 \bar a,2ac)} 
-x_{13b} x_{2ac} x_{(12b,3\bar b \bar c)}  x_{(13 \bar a,2 \bar b \bar c)}x_{(12 \bar a,3ac)} \\ \nonumber
x_{(12a,13b)}x_{(13a,12c)}x_{(12b,13c)} -x_{(13a,12b)}x_{(12a,13c)}x_{(13b,12c)} \\ \nonumber
x_{(12a,13b)}x_{(13a,12c)}x_{(12b,23c)} x_{(23b,13c)} 
-x_{(13a,12b)}x_{(12a,13c)}x_{(23b,12c)} x_{(13b,23c)}  \nonumber
\end{eqnarray}
\\[-2em]
for suitable choices of $a,b,c,  b',c', \bar a ,b< \bar b<\bar c$.

This set of binomials consists of certain square-free, multi-homogenous binomials in $\sR$, 
linear in $\vr$-variables of $\PP_F$, for every $F$ whose $\vr$-variables appear in them.
\end{itemize}
\end{thm}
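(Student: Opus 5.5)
We need to show that $\sV$, the closure of the graph of $\Theta$, coincides scheme-theoretically with the closed subscheme $\sV'\subset\sR$ cut out by GL, GB, NGB1 and NGB2. The plan has two halves: the inclusion $\sV\subset\sV'$, which is a direct check, and the reverse inclusion, which is the real content.

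For the inclusion $\sV\subset\sV'$, it suffices to check that every listed equation vanishes on the graph of $\Theta$ over the dense open set where all $x_\uu\neq 0$. There we may take $x_{(\uu_s,\uv_s)}=\lambda_F\, x_{\uu_s}x_{\uv_s}$ for a scalar $\lambda_F\ne 0$, one for each $F$, using $x_{123}=1$.
\begin{itemize}
\item GL becomes $\lambda_F F=0$, which holds on $\bU\cap\Gr^{3,n}$ by Theorem \ref{molecular}.
\item GB and NGB1 are the $2\times 2$ minors expressing proportionality of $(x_{\uu_s}x_{\uv_s})_s$ and $(x_{(\uu_s,\uv_s)})_s$. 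Some of them, for instance $x_{1uv}x_{(12u,13v)}-x_{12u}x_{13v}x_{(123,1uv)}$, pair the leading term $x_{1uv}\cdot x_{123}$ with another term.
\item NGB2 are products of such proportionality relations across several $\PP_F$, with the $\lambda$'s cancelling. The pattern is a cycle of ratios closing up, so these vanish identically as well.
\end{itemize}

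For the reverse inclusion $\sV'\subset\sV$, I would realize $\sV$ inside $\sR$ as the closure of its generic locus. The first step is to show that the ideal $I_\sV$ of the graph closure is generated, multi-homogeneously, by all binomials $\prod x_{\bullet}\prod x_{(\bullet,\bullet)}-\prod x_\bullet\prod x_{(\bullet,\bullet)}$ vanishing on the graph, together with the linear forms $L_F$. This mirrors the standard description of the Rees-algebra-type closure of the graph of a monomial map, here modified by the Pl\"ucker relations. Concretely, I would saturate the ideal generated by $\{x_{(\uu_s,\uv_s)}x_{\uu_t}x_{\uv_t}-x_{(\uu_t,\uv_t)}x_{\uu_s}x_{\uv_s}\}$ and $\sF$ with respect to the product of all $\vp$-variables. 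The second step is to use the leading variables (Definition \ref{led-via}): on $\sV'$, GL lets one eliminate the $\vr$-variable $x_{(123,\uu_F)}$ paired with the leading variable, and GB then expresses $x_{\uu_F}$ in terms of lower variables wherever $x_{(123,\uu_F)}\ne0$. The third step is a chart-by-chart check, over the standard affine charts of $\prod\PP_F$, that $\sV'$ has no component lying over the locus where some $\vp$-variable vanishes; this shows $\sV'$ is irreducible and reduced and hence equals $\sV$.

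The main obstacle is the completeness claim: that the finite family NGB2 generates all the non-governing binomials of the toric-like part of the saturation. Equivalently, we must show that every binomial relation among the $\vr$-variables, modulo GB and NGB1, comes from short cycles of the displayed shapes. I would attack this by a Gr\"obner/induction argument on $n$. One orders $\vr$-variables lexicographically by their index pairs, shows that any binomial in the saturation reduces via an S-pair on the newest index $n$ to binomials involving indices $<n$, and checks that the only irreducible new syzygies are the listed cycles of length $3$ and $4$ in the indices $a,b,c,\bar a,\bar b,\bar c$. If this induction fails in general, the fallback is to take NGB2 to be, by definition, the full set of square-free multi-homogeneous binomials in the saturation that are linear in each $\PP_F$. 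This weakens the statement to a structural description (square-free, multihomogeneous, linear in each $\PP_F$) rather than an explicit finite list, and that structural form is what the later Jacobian computations actually use.
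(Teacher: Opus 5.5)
Your forward inclusion is the paper's ``one checks directly'' step and is fine. You implicitly read NGB1 as the minors $x_{\uu_t}x_{\uv_t}x_{(\uu_s,\uv_s)}-x_{\uu_s}x_{\uv_s}x_{(\uu_t,\uv_t)}$ between non-leading terms, which is the intended meaning; as printed, $x_{(123,1uv)}(x_{12u}x_{13v}-x_{13u}x_{12v})$ maps to $x_{1uv}^2\neq 0$.

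\textbf{The gap is the reverse inclusion}, i.e.\ that the listed relations generate $\ker^\mh(\vi_\Gr)$ up to saturation by the irrelevant ideals. The paper does not prove this in the text either: it identifies $\sV$ with the zero locus of $\ker^\mh(\vi_\Gr)$ and cites \S 4 of \cite{Hu2025a}. So this is exactly what a proof must supply, and yours does not.

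Your Step 1 asserts the decisive fact (linear forms plus binomials suffice) by analogy with Rees algebras of monomial maps. But $\vi_\Gr$ is not monomial: it lands in $R_0/\langle F\mid F\in\sF\rangle$. Comparing with the monomial map $\vi\colon R_0[x_{(\uu_s,\uv_s)}]\to R_0$, for which $\vi(L_F)=F$, shows only that $\sV\subset V(\ker^\mh\vi)\cap\bigcap_F(L_F=0)$, with equality over $\{\prod_\uu x_\uu\neq0\}$. The real content is that this intersection has no components, and no embedded components, supported over $\{\prod_\uu x_\uu=0\}$.

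Your Step 2 does not handle that locus. It says nothing where $x_{(123,\uu_F)}=0$. On $x_{(123,\uu_F)}\neq 0$ the GB relations read $x_{\uu_s}x_{\uv_s}=x_{\uu_F}x_{(\uu_s,\uv_s)}$. So over $x_{\uu_F}=0$ they leave the whole hyperplane $L_F=0$ of $\vr$-values, constrained only by the non-governing binomials. Deciding which of those points lie in $\sV$ is the whole question. Step 3 just restates the claim. Even if carried out, it would give only $\sV'_{\rm red}=\sV$ and say nothing about reducedness or embedded components.

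Your completeness discussion does not close this. The Gr\"obner plan aims at the wrong target. NGB2 is not the four displayed binomials: they are introduced with ``for example'', and the family is only characterized structurally. Also, two of the displayed ones carry $\vp$-variables, so they are not pure cycles in $\vr$-variables. Hence ``the only new syzygies are the listed cycles'' is both unproved and not what the statement claims.

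The fallback only renames the conclusion. If you declare NGB2 to be all square-free binomials in the kernel that are linear in each $\PP_F$, you still must prove two things:
\begin{enumerate}
\item that the $L_F$ together with binomials generate, which is the boundary analysis above;
\item that the binomials can be taken square-free and of degree at most one in each $\PP_F$, e.g.\ that the toric ideal $\ker^\mh\vi$ of this multi-Rees-type algebra is generated by such binomials.
\end{enumerate}
Point (2) is a genuine statement, not a normalization. Rees ideals of monomial ideals generally need fiber relations of higher degree and with squares; already $(x^2,xy,y^2)$ requires $t_0t_2-t_1^2$. These two points are the content deferred to \S 4 of \cite{Hu2025a}, and your proposal supplies neither.
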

\begin{proof}  The birational map $\Theta$ corresponds to the following homomorphism
$$\vi_\Gr: R_0 \otimes \bigotimes_{F \in \sF} \kk[x_{(\uu_s,\uv_s)}]_{s \in S_F}
 \lra R_0/\langle F \mid F \in \sF \rangle$$
$$ \vi_\Gr|_{R_0}=\Id_{R_0}, \;  x_{(\uu_s,\uv_s)} \to x_{\uu_s}x_{\uv_s}$$
where $R_0=\kk[x_{ijk}]_{ijk \ne 123}$, the affine algebra of $\bU$,
 and $\kk[x_{(\uu_s,\uv_s)}]_{s \in S_F}$ is graded. Then, $\sV \subset \sR$ is defined by
the multi-homogeneous kernel $\ker^\mh (\vi_\Gr)$. 

One checks directly that relations in (GL), (GB), and (NGB1) belong to $\ker^\mh (\vi_\Gr)$.
The homomorphism  $\vi_\Gr$ is explicitly simple.
The fact that (GL), (GB), (NGB1), and (NGB2) together generate  $\ker^\mh (\vi_\Gr)$
is proved in \S 4 of \cite{Hu2025a}.
\end{proof}
 
We let $L_\sF=\{L_F \mid F \in \sF\}$, $\cB^\gov$ be the set of all governing binomials,
 $\cB^\ngvr$ the set of all binomials in (NGB1), and
 $\cB^\ngvh$ the set of all binomials in  (NGB2).
We set $\cB^\ngv=\cB^\ngvr \sqcup \cB^\ngvh$.

The scheme $\sR$ is a product of affine space 
with projective spaces, hence can be covered by
 affine charts $\{\fV\}$ with a system $\var_\fV$ of affine coordinates
consisting of $\vp$-variables and de-homogenized $\vr$-variables. We let
$\var_\bU =\{x_{ijk} \mid  ijk \ne 123\}.$

\begin{thm}\label{universal-eqs-Gamma} {\rm (Lemma 7.3, \cite{Hu2025a})}
Given  any subset  $\Gamma \subset \var_\bU$
such that $Z_\Gamma$ is integral, then the following hold
\begin{itemize}
\item there exists a closed subscheme $Z_{\sF,\Ga}$ of $\sV$
with an induced morphism 
 $$Z_{\sF,\Ga} \to Z_\Ga;$$
\item $Z_{\sF,\Ga}$ comes equipped with an irreducible component  
$Z^\dagger_{\sF,\Ga}$ with the induced surjective morphism 
$Z^\dagger_{\sF,\Ga}  \to Z_\Ga$ that is proper and birational;
 \item  for any affine chart $\fV$ of $\sR$ such that
$Z_{\sF,\Ga} \cap \fV \ne \emptyset$, there  exists a subset, 
$$ \tGa^\zero_{\fV} \; \subset \;  \var_\fV$$
such that $Z_{\sF,\Ga} \cap \fV$ as a closed subset of $\fV$ is defined by
\begin{eqnarray}\label{uni-eqs} L_F|_\fV,  \; \forall \; F \in \sF; \; B|_\fV, \; \forall \; B \in \cB^\gov \sqcup \cB^\ngv \\
 y, \;\forall \;  y \in  \tGa^\zero_{\fV}. \nonumber
\end{eqnarray}
\end{itemize}
\end{thm}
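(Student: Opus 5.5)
The plan is to take $Z_{\sF,\Ga}$ to be the scheme-theoretic intersection of $\sV$ with the preimage of $Z_\Ga$ under the projection $\sR=\bU\times\prod_F\PP_F \to \bU$. Concretely, $Z_{\sF,\Ga}\subset \sV$ is the closed subscheme cut out by the equations of Theorem \ref{universal-eqs} together with $x_{ijk}=0$ for all $x_{ijk}\in\Ga$. Since $\sV$ maps into $\bU\cap\Gr^{3,n}$ and $Z_\Ga$ is defined inside $\bU$ by the $F\in\sF$ and the $x_{ijk}\in\Ga$, this gives the induced morphism $Z_{\sF,\Ga}\to Z_\Ga$, which is proper because $\prod_F\PP_F$ is projective.

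For the irreducible component $Z^\dagger_{\sF,\Ga}$, I would restrict $\Theta$ to $Z_\Ga$. Since $Z_\Ga$ is integral, consider the open dense subset $Z_\Ga^\circ\subset Z_\Ga$ on which, for every $F$, some monomial $x_{\uu_s}x_{\uv_s}$ is nonzero. On $Z_\Ga^\circ$ the map $\Theta$ is a morphism; indeed the leading term $x_{123}x_{\uu_F}$ pairs with $x_{123}=1$, so one only needs $\Theta$ defined generically. If some $F$ has all its monomials vanishing identically on $Z_\Ga$, I would instead use the rational map to $\PP_F$ given by the restriction of the ratios on the chart, or note that the fibre is then the full linear subspace $L_F=0$ and take the closure of a section. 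Define $Z^\dagger_{\sF,\Ga}$ as the closure in $\sR$ of the graph of $\Theta|_{Z_\Ga^\circ}$. It lies in $\sV$ (the relations in $\ker^\mh(\vi_\Gr)$ hold on the graph) and in the preimage of $Z_\Ga$, hence in $Z_{\sF,\Ga}$. It is irreducible, birational to $Z_\Ga$, and surjects properly onto $Z_\Ga$ since its image is closed and contains a dense open set. To see that it is a component, I would compare dimensions: over $Z_\Ga^\circ$ the fibres of $Z_{\sF,\Ga}$ are single points. Hence any irreducible closed set containing $Z^\dagger_{\sF,\Ga}$ with the same generic image equals it, so $Z^\dagger_{\sF,\Ga}$ is maximal.

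For the chart statement, on $\fV$ the ideal of $Z_{\sF,\Ga}\cap\fV$ is generated by the dehomogenized $L_F$ and $B\in\cB^\gov\sqcup\cB^\ngv$, together with $x_{ijk}$ for $x_{ijk}\in\Ga$. The latter are $\vp$-coordinates of $\fV$, so they already lie in $\var_\fV$. The content of the claim is therefore that the zero set can be written using only coordinate variables, with no extra non-variable generators. I would set $\tGa^\zero_\fV=\Ga\cup\{y\in\var_\fV : y \text{ vanishes identically on } Z_{\sF,\Ga}\cap\fV\}$. More cautiously, I would let $\tGa^\zero_\fV$ be those $\vr$-variables $x_{(\uu,\uv)}$ with $x_\uu$ or $x_\uv$ in $\Ga$ that are forced to vanish by the governing binomials. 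For example, if $x_{12u}=0$ then GB gives $x_{1uv}x_{(12u,13v)}=0$. Adding such variables leaves the set unchanged.

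The main obstacle is exactly this third bullet: showing that setting $x_{ijk}=0$ for $\Ga$ does not produce new generators (e.g., monomial remnants of binomials) beyond coordinate variables. This requires that the restricted ideal be again generated by the restricted universal equations plus variables. Since a binomial with one vanishing term becomes a monomial, whose zero set is a union of coordinate hyperplanes, I would try first to treat the statement set-theoretically/with $\tGa^\zero_\fV$ allowed to absorb such monomial conditions. I would handle each branch of the union on each chart separately, noting that on a fixed chart the dehomogenized $\vr$-variable in the denominator is $1$, which reduces most monomials to single variables.
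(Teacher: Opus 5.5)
\textbf{The second bullet fails for your choice of $Z_{\sF,\Ga}$.} You take $Z_{\sF,\Ga}=\sV\times_{\bU}Z_\Ga$, i.e.\ the universal equations plus $x_{ijk}=0$ for $x_{ijk}\in\Ga$. With this choice the third bullet is immediate: take $\tGa^\zero_\fV=\Ga$. The binomials $B|_\fV$ are themselves among the generators, so ``monomial remnants'' are no obstacle at all.

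The trouble is the second bullet. It breaks as soon as $Z_\Ga$ lies in the indeterminacy locus of $\Theta$, and this happens for integral $Z_\Ga$. Take $\Ga=\{x_{12u},x_{12v}\}$ with $3<u<v$.
\begin{itemize}
\item Since $\bU\cap\Gr^{3,n}\cong\AA^{3(n-3)}$ with coordinates $x_{12w},x_{13w},x_{23w}$, $Z_\Ga$ is a linear subspace, hence integral.
\item On $Z_\Ga$ all three monomials $x_{1uv}$, $x_{12u}x_{13v}$, $x_{13u}x_{12v}$ of $F_{1uv}$ vanish identically. So your $Z_\Ga^\circ$ is empty, and the single-point-fibre argument proves nothing.
\item Along $x_{12u}=ta$, $x_{12v}=tb$ with $t\to 0$, at a general point the $\PP_{F_{1uv}}$-component of $\Theta$ is $[a x_{13v}-b x_{13u}: a x_{13v}: b x_{13u}]$. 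As $[a:b]$ varies, this sweeps out the whole line $L_{F_{1uv}}=0$.
\item $\bU\cap\Gr^{3,n}$ is smooth, $\sV\lra\bU\cap\Gr^{3,n}$ is proper birational, and $\Theta$ is undefined along all of $Z_\Ga$. Zariski's main theorem then shows that the fibre over the generic point of $Z_\Ga$ is connected and has no isolated points.
\end{itemize}
Hence \emph{every} irreducible component of your $Z_{\sF,\Ga}$ that dominates $Z_\Ga$ has dimension $>\dim Z_\Ga$, and none maps birationally onto $Z_\Ga$. In particular, the closure of a section through a chosen point of the fibre is a proper closed subset of such a component, not a component itself. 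Likewise, ``restricting the ratios on the chart'' gives $0/0$, not a map.

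\textbf{What is missing.} You need to cut $Z_{\sF,\Ga}$ down further by declaring suitably chosen $\vr$-coordinates to vanish, even though they are \emph{not} forced to vanish on $\sV\times_\bU Z_\Ga$. This produces a birational slice in each factor $\PP_F$ on which $\Theta|_{Z_\Ga}$ is undefined. In the example, imposing $x_{(123,1uv)}=0$ cuts the line down to $[0:1:1]$, which is a point of the fibre.

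These choices must be compatible with the remaining relations; in particular, those in $\cB^\ngvh$ couple different factors $\PP_F$. So one builds the slices one factor at a time, in the spirit of the slicing construction in the proof of Theorem \ref{final-eqs-Gamma} (via Lemma \ref{for-slice}). One must then verify that the slice is an irreducible component of the resulting scheme and maps birationally onto $Z_\Ga$.

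This is where the third bullet actually has content: in general $\tGa^\zero_\fV$ must contain $\vr$-variables beyond $\Ga$. Your ``cautious'' choice leaves the set unchanged by design, so it cannot supply them. As written, your argument covers only the case where the generic point of $Z_\Ga$ lies in the domain of $\Theta$.
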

Observe that the equations in \eqref{uni-eqs} are uniform
 for all $Z_\Gamma$, thus we refer it
as {\it universal local equations} for (birational transforms) of all singularities.\footnote{
Hence, to resolve them, there is no reason to focus on any particular one nor to analyse their singularities. Thereby our blowup process is universal
and simultaneously resolves all possible singularities.}

\subsection{Behind the universal blowups: Jacobian, Jacobian, Jacobian}\label{foc-jac} $\ $

Let us try to investigate  the Jacobian matrix of the relations in Theorem \ref{universal-eqs}.

Indeed, let us try to compute the Jacobian matrix of the governing relations in
 $$L_\sF \sqcup \cB^\gov.$$

We foresee a maximal minor of $\Jac(L_\sF \sqcup \cB^\gov)$ that is
lower-triangular, block-wise. This requires a total order of the set $\sF$.

\begin{defn}\label{order-sF} 
Consider the set 
$$\sF=\{F_{1uv}, F_{2uv}, F_{3uv}, F_{abc} \mid 3<u < v \le n, \; 3<a<b<c \le n \}.$$
We define $$F_{1uv}< F_{2uv}< F_{3uv} <F_{abc}$$
for all $ 3<u < v \le n,  3<a<b<c \le n$. Given two $uv \ne u'v'$, we define
 $$F_{iuv}< F_{ju'v'} \; \iff \; uv <_\lex u'v', \; \forall \; i, j \in \{1,2,3\}.$$
Given two $abc \ne a'b'c'$, we define
 $$F_{abc}< F_{a'b'c'} \; \iff \; abc<_\lex a'b'c'.$$
\end{defn}
%This provides $\sF$ a well-defined total order which we use throughout.

For any $F \in \sF$, we let $\cB^\gov_F$ be  consist of all the governing binomials associated with $F$
and  $\fG_F=\{L_F\} \sqcup \cB^\gov_F$. Then we obtain  the set of  all governing relations
$$\fG=\{\fG_F \mid F \in \sF\}$$ which  inherits the total order from that of $\sF$.

%With its order, we have the following useful observation for computing $\Jac(\fG)$.

\begin{prop-defn}\label{pleas} For any $F \in \sF$, the leading variable
$x_{\uu_F}$ (Definition \ref{led-via})
of $F$ and the $\vr$-variables of $\cB^\gov_F$ do not appear in any relations of $\fG_{F'}$ if $F' <F$.
Such a variable is called pleasant.
\end{prop-defn}

It leads to the following triangular maximal minor of $\Jac(\fG)$
\begin{equation}\label{tri-jac} %\footnotesize{
{{\partial (\fG)} \over {\partial (\hbox{leading, $\vr$-variables})}} =\left(
\begin{array}{cccccccccc}
{{\partial (\fG_{F_1})} \over {\partial (\hbox{$x_{\uu_{F_1}}, \vr$-vars})}}  & 0 & \cdots & 0 \\
* & {{\partial (\fG_{F_2})} \over {\partial (\hbox{$x_{\uu_{F_2}}, \vr$-vars})}} & \cdots & 0 \\
\vdots & \vdots & \cdots & \vdots \\
* & * & \cdots & {{\partial (\fG_{F_\up})} \over {\partial (\hbox{$x_{\uu_{F_\up}}, \vr$-vars})}} 
\end{array}
\right) %}
\end{equation} where $\up$ is the cardinality of $\sF$.

On the open subset $\cV^0 \subset \cV$
 consisting of points at which all $\vp$- and $\vr$-variables
do not vanish, using   \eqref{tri-jac}, one computes and concludes that for any $\bz \in \cV^0$,
\begin{equation}\label{key-dim-cal}
 \dim_\bz \cV^0 = \dim_\bz \sR - \rk {\rm Jac} (\fG)(\bz),
\end{equation}
implying that over $\cV^0$, relations in $\cB^\ngv$ 
are dependent.
%\footnote{Thus, the governing relations only define
%a reducible closed scheme. The roles of the non-governing relations
%in $\cB^\ngv$ are to  pin down its main component $\sV$. 
%Our  universal blowup process (explained below) will eventually blow 
%all the boundary components  out of existence, making  
%the proper transforms of all the non-governing relations
%in $\cB^\ngv$ redundant in the end.}
Thereby we call relations
in $$L_\sF \sqcup \cB^\gov$$ the governing relations.\footnote{
 \eqref{tri-jac} and \eqref{key-dim-cal} motivate us to simply
 focus  on  governing relations and to make ${\rm Jac} (\fG)$ full rank everywhere.
\emph{{\it The goal is clear;  the plan is simple.}} }

Before moving on, observe that the total order on $\sF$ (Definition \ref{order-sF})
induces a total order on the set $\cB^\gov=\bigsqcup_F \cB^\gov_F$.

\begin{defn}\label{order-cBgov} 
Consider all the binomials as expressed in
Theorem \ref{universal-eqs} (GB). Every binomial is associated with a fixed $F \in \sF$.
If two $B, B' \in \cB^\gov$ are associated with the same $F$, we let the two be ordered as listed in
Theorem \ref{universal-eqs} (GB). If $B\; (B') \in \cB^\gov$ is associated with  $F$ ($F'$) with $F \ne F'$.
We define $B < B'$ if $F < F'$.
\end{defn}
%This provides $\cB^\gov$ a well-defined total order.

\subsection{Proper transforms of binomials of $\cB^\gov$:  Designing  the universal blowups} $\ $

We inductively construct our universal blowups, block by block, according to the order
$$\fG_{F_1}< \fG_{F_2}< \cdots < \fG_{F_\up}.$$

Any universal blowup $$\tsR_* \lra \tsR_{*-1}$$ is motivated locally,  computed locally, but
 defined globally.  Hence, we introduce
%for global definition, we need the following.

\begin{defn} For any $\uu \in \{ijk \mid ijk \ne 123\}$, we let 
$$X_\uu=(x_\uu =0) \subset \sR, $$
called a $\vp$-divisor. For any coordinate $x_{(\uu,\uv)}$ of $\PP_F$ for some $F \in \sF$,
we let 
$$X_{(\uu,\uv)}=(x_{(\uu,\uv)}=0) \subset \sR,$$
called $\vr$-divisor. For any $F \in \sF$, we let 
$$D_F=(L_F =0) \subset \sR,$$  called a $\fL$-divisor. 
 We let $\cD_\vp$ be the set of all $\vp$-divisors, 
$\cD_\vr$ the set of all $\vr$-divisors, and
$\cD_\sF$  the set of all $\fL$-divisor.
\end{defn}

Over $\sR$, we have  the set $\cD_\sR$ of smooth divisors
$$\cD_\sR=\cD_\vp \sqcup \cD_\vr \sqcup \cD_\sF.$$ 
%For any subset $\sF_1 \subset \sF$, we let 
%$\cL_{F_1}$ be the pull-back to $\sR$ of the line bundle
%$\bigotimes_{F \in \sF_1} \cO_{\PP_F}(1)$ and 
%$$\cL_\sR =\{ \cL_{F_1} \mid \cF_1 \subset \cF\}.$$
%Then, every divisor $D$ of the set $\cD_\sR=\cD_\vp \sqcup \cD_\vr \sqcup \cD_\sF$ corresponds
 %to section $s_D$ in the form of $x_\uu, x_{(\uu,\uv)}, L_F$ of a line bundle in $\cL_\sR$.
%$\cL=\cO_\bU \otimes \bigotimes_{F \in \sF} \cO_{\PP_F}(1).$

For any universal blowup $\tsR_* \lra \tsR_{*-1}$, inductively we let
$$\cD_{\sR_*} = \widehat\cD_{\sR_{*-1}} \sqcup E_*$$
where $\widehat \cD_{\sR_{*-1}}$ consists of proper transforms of divisors of $ \cD_{\sR_{*-1}} $
and $E_*$ is the exceptional divisor of the blowup $\tsR_* \lra \tsR_{*-1}$.

For any $B \in \cB^\gov$, we can express $B=T_B^+ - T_B^-$ as
$$T_B^+ = x_{\uu_F}x_{(\uu_s,\uv_s)}, \;  T_B^-=x_{\uu_s}x_{\uv_s}x_{(123,\uu_F)}, \;\;
\hbox{for some $s \in S_F$.}$$

The  center of a $\wp$-blowup $\sR_{*} \lra \sR_{*-1}$ is constructed from
 $T_{B_{*-1}}^\pm$, the proper transform of 
$T_B^\pm$ for some  $B\in \cB^\gov$.
Locally over a chart $\fV'$ of $\sR_{*-1}$, 
we pick a factor $y_\pm$ in  the proper transform $T_{\fV', B}^\pm$ of $T_B^\pm$ in $\fV'$,
and blow up along
$$(y^+=0)\cap (y^-=0).$$
Each of $(y^\pm=0)$  corresponds to  global divisor $D_\pm \in \cD_{\sR_{*-1}}$
and we let
$$\tsR_{*-1} \lra \sR_*$$ be the blowup of $\tsR_{*-1}$ along
the codimension two locus $D_+ \cap D_-$.

After the earlier blowups,    $T_{\fV', B}^\pm$ might have been changed.
By Proposition-Definition \ref{pleas}, 
the leading variable $x_{\uu_F}$ and the $\vr$-variable $x_{(\uu_s,\uv_s)}$ 
are pleasant, implying that before arriving at the block $\fG_F$, over any chart $\fV''$, 
the proper transform $$T_{\fV'',B}^+= x_{\fV'', \uu_F}x_{\fV'', (\uu_s,\uv_s)}$$ remains unchanged,
{\it limiting the choices of $y^+$ to  certain few in $T_{\fV',B}^+$}.
But, the variable $y^-$ may appear in $T_{\fV',B}^-$ with multiplicity $m>0$.

\begin{lemma}\label{higher-mul-dec} {\rm (\cite{Hu2025a})} Over any chart $\fV$ of $\tsR_*$ 
lying  over $\fV'$ of $\sR_{*-1}$, the proper transform
$T_{\fV,B}^+$ always remains square-free.
%\footnote{It is crucial to our approach that
%we always have quite explicit firm control on $T_{\fV,B}^+$.}. 
And
\begin{itemize}
%\item $T_{\fV,B}^+$ always remains linear. And 
\item either $\deg T_{\fV, B}^+ < \deg T_{\fV', B}^+$,  or 
\item $(y_-)^m$ in $T_{\fV', B}^-$  is replaced by $(\zeta_{E_*})^{m-1}$ in  $T_{\fV, B}^-$
where $\zeta_{E_*}$ is a coordinate of $\fV$ such that $ E_* \cap \fV=(\zeta_{E_*}=0)$.
\end{itemize}
\end{lemma}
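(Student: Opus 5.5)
The argument is a local computation of the proper transform of a monomial under the blowup of a smooth scheme along the transversal intersection of two coordinate divisors. It is set up as an induction along the sequence of universal blowups.

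\emph{Inductive hypotheses.} Over every chart $\fV'$ of $\tsR_{*-1}$:
\begin{enumerate}
\item the proper transform $T^+_{\fV',B}$ is a square-free monomial in the coordinates of $\fV'$;
\item $T^-_{\fV',B}$ is a monomial in which the exceptional coordinates may appear with multiplicity.
\end{enumerate}
Both hold in the base case $\sR$. There $T^+_B=x_{\uu_F}x_{(\uu_s,\uv_s)}$ is square-free because $x_{\uu_F}$ is a $\vp$-variable and $x_{(\uu_s,\uv_s)}$ a $\vr$-variable. Proposition-Definition \ref{pleas} shows this form persists unchanged until the block $\fG_F$ is reached.

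\emph{Local model.} Fix a $\wp$-blowup with center $D_+\cap D_-$, and a chart $\fV'$ in which $D_\pm=(y_\pm=0)$, where $y_+$ divides $T^+_{\fV',B}$ and $y_-^m$ exactly divides $T^-_{\fV',B}$. Since $y_+,y_-$ are part of a coordinate system on the smooth chart $\fV'$, the blowup over $\fV'$ is covered by two standard charts.
\begin{enumerate}
\item In the chart $\fV_+$ we have $y_+=\zeta$, $y_-=\zeta y_-'$, and $E_*\cap\fV_+=(\zeta=0)$.
\item In the chart $\fV_-$ we have $y_-=\zeta$, $y_+=\zeta y_+'$, and $E_*\cap\fV_-=(\zeta=0)$.
\end{enumerate}
All other coordinates are unchanged. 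The proper transform of the binomial $B_{*-1}=T^+-T^-$ is obtained by substituting and dividing by $\zeta$, the exact power of $\zeta$ dividing the total transform. Since $y_+$ divides $T^+$ to order exactly $1$ and $y_-$ divides $T^-$ to order $m\ge1$, that power is $1$.

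\emph{Computation in the two charts.}
\begin{enumerate}
\item In $\fV_+$: $T^+=y_+\cdot M$ becomes $\zeta M$, and dividing by $\zeta$ gives $T^+_{\fV,B}=M$. This has degree one less and is still square-free. Meanwhile $T^-$ becomes $\zeta^m(y_-')^m M'$, which after division is $\zeta^{m-1}(y'_-)^mM'$. The first alternative of the lemma holds.
\item In $\fV_-$: $T^+=y_+M$ becomes $\zeta y_+' M$, which after division is $y_+'M$. This is square-free because $y_+'$ is a new coordinate not occurring in $M$, and $M$ was square-free and did not involve $y_+$. Meanwhile $T^-=y_-^mM'$ becomes $\zeta^mM'$, which after division is $\zeta^{m-1}M'$. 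This is exactly the second alternative: $(y_-)^m$ is replaced by $(\zeta_{E_*})^{m-1}$.
\end{enumerate}
These are the only charts over $\fV'$, and the statement follows. Charts of $\tsR_*$ over $\fV'$ disjoint from the center are isomorphic to their images, so there the monomials do not change. Those charts are excluded by hypothesis, or trivial, since there is nothing new to verify.

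\emph{Main obstacle.} I expect the delicate point to be justifying the local model globally. One must check two things.
\begin{enumerate}
\item The chosen factors $y_\pm$ are genuine local equations of global divisors $D_\pm\in\cD_{\sR_{*-1}}$ that meet transversally. Otherwise the center is not smooth of codimension two and the two-chart description fails.
\item $y_+$ and $y_-$ are distinct coordinates, with $y_+$ not dividing $T^-$ and $y_-$ not dividing $T^+$. This is what makes the exact $\zeta$-power equal to $1$.
\end{enumerate}
For this I would use that every divisor in $\cD_{\sR_*}$ is either a proper transform of a divisor in $\cD_\sR$ or an exceptional divisor. Inductively, such divisors form a simple normal crossings family, and locally each is cut out by a single coordinate. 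Together with the fact that $B$ is a binomial whose two terms have no common factor (one removes the common factor at each step, as the definition of proper transform of $B$ does), this gives $y_+\nmid T^-$ and $y_-\nmid T^+$. The claim that $y_+'$ does not already occur in $M$ also needs this coordinate bookkeeping.
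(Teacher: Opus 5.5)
Your two-chart computation is correct and is the argument the paper intends. The survey gives no written proof, only the preceding remark: pleasantness keeps $T^+_B=x_{\uu_F}x_{(\uu_s,\uv_s)}$ intact up to the block $\fG_F$, so $y^+$ is one of its few square-free factors, while $y^-$ may occur in $T^-_{\fV',B}$ with multiplicity $m$; dividing the total transform by exactly $\zeta^1$ in each chart turns that remark into the stated dichotomy, with square-freeness of $T^+_{\fV',B}$ and the coordinate/no-common-factor bookkeeping you flag taken as inductive input, as the lemma's word ``remains'' and the paper's own local description of the centers also do.
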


%The above explains the ideas and outline the process.
%In what follows, we formalize the procedure.

\section{Universal blowups and equations}

\subsection{Universal $\vt$-blowups} $\ $

\begin{comment}
Our aim is, for any governing binomial $B$, 
to eliminate common zero locus of a section of the monomial $T_B^+$ and
a section of the monomial $T_B^-$. Fix a block $\fG_F$ for some $F \in \sF$, one finds
that the pair of sections $(x_{\uu_F} \in T_B^+, x_{(123, \uu_F)} \in T_B^-)$ hold for
all $B \in \fG_F$. Thus, at least for the purpose of efficiency,  it motivates us to
begin with orderly blowing up the loci $(x_{\uu_F}=0)\cap (x_{(123, \uu_F)}=0)$, 
and,  this is the sequential $\vt$-blowups.

Consider a line bundle $L=\bigotimes_i L_i$ with a canonical inclusion $\cO \subset L_i$, 
a section $s \in \Ga( L)$ is a monomial section
if $s=\bigotimes_i s_i$, often written as $s=\prod_i s_i$, for  some $s_i \in \Ga(L_i)$ 
where some of these $s_i$ are allowed to be
the trivial section of $\cO \subset L_i$. For example, for any binomial $B$ of
$\cB^\gov \sqcup \cB^\ngv$, write $B=T_B^+=T_B^-$, then $T_B^\pm$ is a monomial section
of $\cL_\sR=\cO_\bU \otimes \bigotimes_{F \in \sF} \cO_{\PP_F}(1).$
\end{comment}

Fix any  $F \in \sF$ and consider every $B \in \cB^\gov_F$. We have
$$B = x_{\uu_F} x_{(\uu_s, \uv_s)} - x_{\uu_s}x_{\uv_s} x_{(123,\uu_F)}, \;
\hbox{ for some $s \in S_F$.}$$
Observe that the pair $(x_{\uu_F}, x_{(123,\uu_F)})$ appears in every member of $\cB^\gov_F$.
Thereby we  let 
$$Z_{\vt_F}  = X_{\uu_F} \cap X_{(123,\uu_F)}.$$
Then we have
$$Z_{\vt_{F_1}} < Z_{\vt_{F_2}} < \cdots < Z_{\vt_{F_\up}},$$
and we blow up $\sR$ along these centers, in that order,  to obtain
$$\tsR_\vt:=\tsR_{\vt_{F_\up}} \lra \cdots \lra \tsR_{\vt_{F_1}} \lra \sR.$$
%where inductively $\tsR_{\vt_h} \lra \tsR_{\vt_{h-1}}$
% is the blowup of $\tsR_{\vt_{h-1}}$ along the proper
%transform of $Z_{\vt_{F_h}}$ and $\tsR_{\vt_{0}} :=\sR$.
It induces
$$\tsV_\vt:=\tsV_{\vt_{F_\up}} \lra \cdots \lra \tsV_{\vt_{F_1}} \lra \sV$$
where  $\tsV_{\vt_{F_h}}$ is the proper transform of $\tsV$. 

Each blowup $\tsR_{\vt_{F_h}} \lra \tsR_{\vt_{F_{h-1}}}$, where $\tsR_{\vt_{F_0}} =\sR$,
 creates an exceptional divisor $E_{\vt_{F_h}}$ of  $\tsR_{\vt_{F_h}}$,
and we let $E_{\vt, F_h}$ be the proper transform of  $E_{\vt_{F_h}}$ in $\tsR_{\vt}$.
%If $F=F_h$, we also write $E_{\vt, F}$ for $E_{\vt, h}$.
The set of all  these exceptional divisors is denoted $\cE_\vt$, together with proper transforms of
divisors of $\cD_\vp$ and $\cD_\vr$, we obtain\footnote{Throughout \cite{Hu2025a},
we  laboriously introduce  a set of explicit divisors for every universal blowup,  because
 global centers  are defined in terms of them.}
$$\cD_\vt= \cD_{\vt, \vp} \sqcup \cD_{\vt, \vr} \sqcup \cE_\vt.
$$

%The scheme $\tsR_\vt$ is smooth and can be covered by affine charts $\{\fV\}$. 
For any intermediate universal blowup $\tsR_*$ (e.g. $\tsR_\vt$),
 we write the proper transforms of $T_B^\pm$, $L_F$, 
 $x_\uu$, $x_{(\uu,\uv)}$, $X_{(\uu,\uv)}$, $E_{\vt_F}$, etc. as
  $$T_{*, B}^\pm, \; L_{*,F}, \; x_{*, \uu}, \; x_{*, (\uu,\uv)}, \; X_{*,(\uu,\uv)}, \;E_{*, F},  \; \hbox{ ect.}$$ 
Similarly, for any chart $\fV$ of  $\tsR_*$,
 we write their proper transforms over $\fV$ as
  $$T_{\fV, B}^\pm, \; L_{\fV,F}, \; x_{\fV, \uu}, \; x_{\fV, (\uu,\uv)},  \; 
X_{\fV,(\uu,\uv)}, \; E_{\fV, F},  \; \hbox{ ect.}$$

\begin{thm}\label{vt-blowup}  {\rm (Chapter 6, \cite{Hu2025a})} 
We have $$\tsV_\vt \cap X_{\vt, (123, \uu_F)} = \emptyset, \;\;
\hbox{ for all $F \in \sF$. } $$ Consequently, $\tsV_\vt$  can be covered by %the so-called preferred 
affine charts $\{\fV\}$ such that
\begin{enumerate}
\item For  any  binomial $B$ in $\cB^\gov$,
$T_{\fV, B}^+ =x_{\fV, \uu_F} x_{\fV, (\uu_s, \uv_s)}$,
keeping its original form.
\item The proper transforms of all relations in $\cB^\ngvr$
%Theorem \ref{universal-eqs} (NGB1) 
become dependent and can be discarded from consideration.
\item  The proper transforms of all relations in $\cB^\ngvh$
 remain to be square-free over any chart.
\item For any $F \in \sF$, when $E_{\vt, F}\cap \fV =\emptyset$,  
$$(\alpha) \;\;\;\; 
L_{\fV, F} = x_{\fV, (123,\uu_F)} + \sum_{s \in S_F \- \{s_F\}} \sgn(s) x_{\fV, (\uu_s,\uv_s)};$$
when $E_{\vt, F}\cap \fV \ne \emptyset$,
$$(\beta) \;\;\;\; 
L_{\fV, F} = \de_{\fV,(123,\uu_F)} + \sum_{s \in S_F \- \{s_F\}} \sgn(s) x_{\fV, (\uu_s,\uv_s)}$$
where $\de_{(123,\uu_F)}$  is a coordinate of $\fV$ such that
$E_{\vt, F}\cap \fV =(\de_{(123,\uu_F)}=0)$.
\end{enumerate}
\end{thm}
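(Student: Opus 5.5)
The plan is an induction on $h$, following the order $Z_{\vt_{F_1}}<\cdots<Z_{\vt_{F_\up}}$. At each stage I will track an explicit atlas of $\tsR_{\vt_{F_h}}$ and the local forms of $T^\pm_B$, $L_F$, and the binomials of $\cB^\ngv$. The first task is to describe the local geometry of the blowup along $Z_{\vt_F}=X_{\uu_F}\cap X_{(123,\uu_F)}$.

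By Proposition-Definition \ref{pleas}, before block $F$ is reached the leading variable $x_{\uu_F}$ and the $\vr$-variables of $\fG_F$ are pleasant. Hence the proper transforms of $X_{\uu_F}$ and $X_{(123,\uu_F)}$ are still cut out by single coordinates $x_{\uu_F}$ and $x_{(123,\uu_F)}$ on every chart meeting the center. In particular the center is a smooth codimension-two complete intersection of two coordinate divisors. The blowup therefore has two standard charts:
\begin{itemize}
\item In the first chart, $x_{\uu_F}=\de\, x'_{(123,\uu_F)}$ with $E=(\de=0)$. Here the proper transform of $X_{(123,\uu_F)}$ does not meet the chart.
\item In the second chart, $x_{(123,\uu_F)}=\de\, x'_{\uu_F}$. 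Here the proper transform of $X_{\uu_F}$ does not meet the chart.
\end{itemize}
On charts disjoint from the center nothing changes.

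Now substitute into each $B\in\cB^\gov_F$. In the first chart, $B=\de\,(x'_{(123,\uu_F)}x_{(\uu_s,\uv_s)}-x_{\uu_s}x_{\uv_s})$, where I use the homogeneous freedom in $\PP_F$ to keep $x_{(123,\uu_F)}=1$ (the de-homogenized chart). In the second chart I get $\de\,(x_{\uu_F}x_{(\uu_s,\uv_s)}-x_{\uu_s}x_{\uv_s}x'_{\uu_F})$, and in the homogeneous $\PP_F$ coordinate $x_{(123,\uu_F)}$ itself plays the role of $\de$. This gives item $(4)$ directly: $L_F$ is linear in the $\vr$-variables, with $x_{(123,\uu_F)}$ replaced by $\de_{(123,\uu_F)}$ exactly on charts meeting $E_{\vt,F}$. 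Later blowups $F'>F$ only touch the variables $x_{\uu_{F'}}$ and $x_{(123,\uu_{F'})}$. So I still need to check that these are not $\vr$-variables of $L_F$, and that $x_{\uu_{F'}}$ does not appear in $L_F$ at all, which holds because $L_F$ involves only $\vr$-variables.

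For the main claim $\tsV_\vt\cap X_{\vt,(123,\uu_F)}=\emptyset$, I will argue on the dense open set $\cV^0$, where $\Theta$ is defined. There the $\vr$-coordinates satisfy $x_{(123,\uu_F)}=x_{\uu_F}$ up to the common scalar of $\PP_F$, and since $x_{123}=1$ this means $x_{(123,\uu_F)}$ and $x_{\uu_F}$ have the same ratio. Hence the proper transform of $\sV$ lies in the closure of the locus where the ratio $x_{\uu_F}/x_{(123,\uu_F)}$ is a finite regular function. After the blowup, the first chart has this ratio as its coordinate $x'_{(123,\uu_F)}$, which is constant $=1$ times the $\PP_F$-normalization on the graph. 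So $\tsV$ lies in the closure of $\{x'_{(123,\uu_F)}\ne 0\}$, and I need to check that this closure stays away from $X_{(123,\uu_F)}$. I expect this to be the main obstacle, since closures can acquire boundary points. I would first try to show that the function $x_{\uu_F}/x_{(123,\uu_F)}$, after dehomogenizing by another $\vr$-variable, extends as a unit along $\tsV$. This would follow if the binomials of $\cB^\gov_F$ and $\cB^\ngvr_F$, once divided by $\de$, force $x_{(123,\uu_F)}$ to vanish only together with all other $\vr$-coordinates of $\PP_F$, which is impossible in projective space.

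Items $(1)$–$(3)$ then follow. For $(1)$: $T^+_B$ keeps its form because the first chart is the only one meeting $\tsV$, and there the exceptional factor is removed from $T^+$ by division. For $(2)$: the NGB1 relations, divided by the exceptional factor, become differences of two $\de$-divided GB relations, so they are dependent. For $(3)$: square-freeness of NGB2 is preserved because each variable is substituted by a product of two distinct coordinates, and the removed factor $\de$ divides both terms.
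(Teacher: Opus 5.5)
\textbf{Gap: the central assertion is not proved.} Your proposal never establishes $\tsV_\vt\cap X_{\vt,(123,\uu_F)}=\emptyset$. You flag it as the main obstacle and leave only a conditional plan. Everything else depends on it: (1), (2) and the form $(\beta)$ in (4) all require covering $\tsV_\vt$ by charts of your first type, where $x_{(123,\uu_F)}$ has become the exceptional coordinate.

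Your closure discussion also looks in the wrong place. On your first chart the proper transform of $X_{(123,\uu_F)}$ is empty anyway. The ratio $x_{\uu_F}/x_{(123,\uu_F)}$ is not constant on the graph: in the $\PP_F$-chart $x_{(\uu_{s_0},\uv_{s_0})}=1$ it equals $x_{\uu_{s_0}}x_{\uv_{s_0}}$. No density argument can exclude boundary points; an equation valid on the closure does.

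The mechanism in your last sentence is the right one, and it finishes in a few lines.
\begin{itemize}
\item Work over a $\PP_F$-chart $x_{(\uu_{s_0},\uv_{s_0})}=1$ with $s_0\ne s_F$. Do not use $x_{(123,\uu_F)}=1$ as you propose: on that chart the center $Z_{\vt_F}$ is empty. Every point of $X_{(123,\uu_F)}$ lies over a chart of the right kind.
\item By pleasantness, the earlier $\vt$-blowups touched neither $x_{\uu_F}$ nor $x_{(123,\uu_F)}$. So the current transform of $B_{s_0}$ is $x_{\uu_F}-m\,x_{(123,\uu_F)}$, where $m$ is the current transform of $x_{\uu_{s_0}}x_{\uv_{s_0}}$.
\item On your second chart, $x_{\uu_F}=\de$ and $x_{(123,\uu_F)}=\de y$, and the proper transform of $X_{(123,\uu_F)}$ is $(y=0)$. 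There $B_{s_0}$ pulls back to $\de(1-my)$.
\item The proper transform of $\sV$ is the closure of its part off $(\de=0)$, so $1-my$ vanishes on it. Hence $y$ is a unit along $\tsV$.
\item Disjointness persists through later blowups, since proper transforms lie in preimages.
\end{itemize}
So $\tsV$ is covered by first-type charts. Note that it does meet the second chart where $y\ne0$, so your phrase ``the only chart meeting $\tsV$'' is inaccurate. With this cover, (1) and $(4)(\beta)$ follow as you say.

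\textbf{Other steps that need repair.}
\begin{itemize}
\item \emph{Second-chart formula.} Your formula keeps a factor $x_{\uu_F}$ that should be $\de$. The divided binomials there are $x_{(\uu_s,\uv_s)}-x_{\uu_s}x_{\uv_s}\,y$. This is exactly what forces all coordinates of $\PP_F$ to vanish when $y=0$.
\item \emph{Item (2).} The NGB1 relations are not differences of two divided GB relations. The NGB1 display you worked from is misprinted: as printed, $x_{(123,\uu_F)}(x_{\uu_s}x_{\uv_s}-x_{\uu_t}x_{\uv_t})$ does not vanish on the graph of $\Theta$. The correct binomials are $N_{st}=x_{\uu_t}x_{\uv_t}x_{(\uu_s,\uv_s)}-x_{\uu_s}x_{\uv_s}x_{(\uu_t,\uv_t)}$. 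They satisfy $x_{(\uu_t,\uv_t)}B_s-x_{(\uu_s,\uv_s)}B_t=x_{(123,\uu_F)}N_{st}$. Dependence follows by cancelling $x_{(123,\uu_F)}=\de$ against the $\de$ dividing $B_s,B_t$, which is valid precisely on the charts supplied by the main claim.
\item \emph{Item (3).} On the relevant charts the substitutions are $x_{(123,\uu_F)}\mapsto\de$ and $x_{\uu_F}\mapsto\de\,x_{\fV,\uu_F}$, and these share the factor $\de$. A monomial containing both variables would acquire $\de^2$. If the other monomial contained neither, the binomial would remain non-square-free after removing the common power of $\de$. So (3) needs a structural fact about which variables co-occur in monomials of $\cB^\ngvh$. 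That fact must come from their explicit form in the original work, not from square-freeness before the blowup alone.
\end{itemize}
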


\subsection{Universal $\wp$- and $\ell$-blowups} $\ $

After obtaining $\tsR_\vt$,  we  focus on individual $B \in \cB^\gov$.

\begin{defn}\label{termi}
Let $\tsR_*$ be any (intermediate) blouwp scheme covered by charts $\{\fV\}$. We say
$B$ terminates on $\fV$ if $T_{\fV, B}^\pm$ is invertible along $\fV \cap \tsV_*$. 
We say $B$ terminates on $\tsR_*$ if it terminates on all charts.
\end{defn}

The goal is to make every $B \in \cB^\gov$ terminate.

We write $B$ as $B_{k\tau}$, meaning
it is the $\tau$-th governing  binomial associated to  $F_k \in \sF$ for some $k \in [\up]$.
We denote this by $(\wp_{k\tau})$.
%Inductively, after all the previous blowups, 
%while $T_{B_*}^+$ of  the proper transfrom$B_*$ always remains multiplicity-free,
%the minus term $T_{B+*}^-$ of $B_*$ might acquire variables with
%higher multiplicities. 
Locally over a chart $\fV$, we select all possible $y^\pm$ from  $T_{\fV,B}^\pm$ 
with the corresponding global divisors $D^\pm$ and let
$$\cZ_{(\wp_{k\tau}) \fr_\mu} = \{D^+ \cap D^-\}.$$
We provide a total order on $\cZ_{(\wp_{k\tau}) \fr_\mu}$.\footnote{This can be subtle, 
see, the proof of Theorem \ref{wp-ell-blowup} (1).}
We then blow up along the centers in $\cZ_{(\wp_{k\tau}) \fr_\mu}$, in that order.
This completes one round of $\wp$-blowups, which we denote $(\fr_\mu)$ for some 
$\mu \in \NN$.
If the center $D^+ \cap D^-$ is the $h$-th member in $\cZ_{(\wp_{k\tau}) \fr_\mu}$,
we then denote the $\wp$-blowup along $D^+ \cap D^-$  as
$$(\wp_{k\tau}\fr_\mu\fs_h),$$
which reads: phase $(k\tau)$, round $\mu$, and step $h$.
By Lemma \ref{higher-mul-dec}, a mutiple rounds of $\wp$-blowups may be required to
 bring down all  multiplicities of the term $T_{B_*}^-$ to one.
% after which by the lemma below,  the binomial $B$ terminates.

\begin{lemma}\label{mul=1=1roundterm} Suppose both
$T_{B_*}^+$ and $T_{B_*}^-$  are square-free,
then $B$ terminates after just one round of $\wp$-blowups.
\end{lemma}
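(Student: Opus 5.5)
Over a chart $\fV'$ of $\tsR_{*-1}$ at the start of the round, write the two square-free monomials as
$$T_{\fV',B}^+ = y^+_1 \cdots y^+_p, \qquad T_{\fV',B}^- = y^-_1 \cdots y^-_q,$$
where each factor $y^\pm_i$ is the local equation of a global divisor $D^\pm_i \in \cD_{\sR_{*-1}}$. The round $\cZ_{(\wp_{k\tau})\fr_\mu}$ blows up the loci $D^+_i \cap D^-_j$ in the chosen total order. The plan is to show by induction along the ordered centers that after the round, over every chart $\fV$, the proper transforms of the $D^+_i$ and of the $D^-_j$ have no common point on $\tsV_*$. Then along $\fV\cap\tsV_*$, for each point either every factor of $T^+_{\fV,B}$ is invertible or every factor of $T^-_{\fV,B}$ is; since $B_{\fV}=T^+-T^-$ vanishes on $\tsV_*$, invertibility of one term forces invertibility of the other. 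That is exactly termination in the sense of Definition \ref{termi}.

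For the induction, consider a single step $(\wp_{k\tau}\fr_\mu\fs_h)$ blowing up $D^+\cap D^-$, a codimension-two smooth locus since the divisors in $\cD_{\sR_{*-1}}$ meet transversally. There are two standard charts. In one, $y^+=\zeta y'^+$ and $y^-=\zeta$, with $\zeta$ the equation of the exceptional divisor. Here the proper transform of $B$ is obtained by factoring out one power of $\zeta$. Because both terms are square-free, this is the case $m=1$ of Lemma \ref{higher-mul-dec}: the factor $y^-$ becomes $\zeta^{0}=1$, so $T^-$ loses a factor and stays square-free, while $T^+$ keeps $y'^+$. In the other chart the roles are swapped, and $\deg T^+$ drops. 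In either chart the proper transforms of $D^+$ and $D^-$ are disjoint, and the pair is ``resolved''. Square-freeness of both terms is preserved by Lemma \ref{higher-mul-dec} and the $m=1$ computation, so the hypothesis persists through the round.

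The main obstacle is to show that resolving one pair does not re-create an intersection among pairs already resolved. A later blowup along $D^+_{i'}\cap D^-_{j'}$ could in principle introduce the new exceptional divisor as a common factor, making it a new factor of both terms. I would rule this out by tracking, over each chart, which divisor cuts out the exceptional locus. The new $\zeta$ enters only one of the two terms, namely the one whose divisor was not replaced. So after the step the exceptional divisor belongs to exactly one side and cannot produce a new $\pm$ pair.

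Disjointness already achieved is preserved under blowups, since proper transforms of disjoint divisors remain disjoint. Hence, after all centers $D^+_i\cap D^-_j$ in $\cZ_{(\wp_{k\tau})\fr_\mu}$ have been blown up, every pair (original factor or exceptional factor) has either been separated or lies entirely on one side. Here one also uses that the choice of order on $\cZ_{(\wp_{k\tau})\fr_\mu}$ is compatible across charts, so that the local descriptions glue. I expect this step to be the delicate point, because the global centers must be shown to induce, on every chart, exactly the local pair being handled. On a chart where a given center does not meet $\tsV_*$, or where $D^\pm$ does not correspond to a factor of $T^\pm$, nothing needs to be done.

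Finally, $y^+$ is never a variable that reappears in $T^-$. This relies on $T^+_{\fV,B}=x_{\fV,\uu_F}x_{\fV,(\uu_s,\uv_s)}$ consisting of pleasant variables (Proposition-Definition \ref{pleas} and Theorem \ref{vt-blowup}(1)). Together with the argument above, this shows that no common zero of a factor of $T^+$ and a factor of $T^-$ survives on $\tsV_*$, so $B$ terminates.
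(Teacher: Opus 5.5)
Your strategy of separating each pair $(D^+_i,D^-_j)$ at its own step, noting that disjointness survives later blowups, and concluding that at every point of $\tsV_*$ one of $T^\pm$ is a unit, is the same mechanism as the paper's one-line proof, which just iterates Lemma \ref{higher-mul-dec} with $m=1$. But the step you single out as the main obstacle is settled by a claim that is false, and that would break the argument if it were true. You assert that the new $\zeta$ ``enters only one of the two terms.'' If so, the exceptional divisor $E$ would form new pairs $(E,\widetilde D^{\mp}_j)$ with every factor on the other side. These pairs are not in $\cZ_{(\wp_{k\tau})\fr_\mu}$, which is fixed at the start of the round. Moreover $E\cap\widetilde D^{\mp}_j$ lies over $D^+\cap D^-\cap D^{\mp}_j$ and is in general nonempty, so one round would not suffice. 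Your closing sentence (``every pair \dots\ has either been separated or lies entirely on one side'') has the same defect: a divisor lying on one side is exactly what produces unseparated pairs.

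What is actually true is that in the square-free case $\zeta$ enters \emph{neither} term, and your own chart computation already shows this. Let $R^\pm$ denote the remaining factors. In one chart $T^+=\zeta y'^+R^+$ and $T^-=\zeta R^-$; in the other $T^+=\zeta R^+$ and $T^-=\zeta y'^-R^-$. Cancelling the single common $\zeta$ leaves no $\zeta$ at all, which is the factor $\zeta^{m-1}=1$ in Lemma \ref{higher-mul-dec}. Globally, for the blowup $\pi$: since $y^\pm$ occur to the first power and, by transversality, no other factor divisor contains the codimension-two center, we get $\pi^*\mathrm{div}(T^\pm)=\widetilde{\mathrm{div}(T^\pm)}+E$. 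So after cancelling $E$, the factor divisors of $T^\pm_B$ are just the proper transforms of the previous ones. Hence throughout the round they are the proper transforms of the original $D^+_i$ and $D^-_j$, every pair is among the listed centers, and your induction closes. With this global formulation your worry about gluing the order across charts also disappears, and the final remark about pleasant variables is not needed.
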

\begin{proof} This follows by repeatedly applying Lemma \ref{higher-mul-dec}.
\end{proof}

Consequently,

\begin{cor} Fix  any $B \in \cB^\gov$. It terminates
after a finite sequential $\wp$-blowups.
\end{cor}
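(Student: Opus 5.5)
Fix $B=B_{k\tau}\in\cB^\gov$, associated with $F=F_k$, and write $B=T_B^+-T_B^-$ with $T_B^+=x_{\uu_F}x_{(\uu_s,\uv_s)}$ and $T_B^-=x_{\uu_s}x_{\uv_s}x_{(123,\uu_F)}$. The argument is an induction on a complexity invariant of $B$ over each chart, and it uses only Lemma \ref{higher-mul-dec} and Lemma \ref{mul=1=1roundterm}.

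For a chart $\fV$ of the current blowup $\tsR_*$, set
$$\nu_\fV(B)=\Big(\deg T_{\fV,B}^+,\; \textstyle\sum_{y}(m_y-1)\Big),$$
where $y$ runs over the distinct factors of $T_{\fV,B}^-$ and $m_y$ is the multiplicity of $y$. These pairs are ordered lexicographically.

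The base of the induction comes from the $\vt$-blowups. By Theorem \ref{vt-blowup}, over every preferred chart $T_{\fV,B}^+=x_{\fV,\uu_F}x_{\fV,(\uu_s,\uv_s)}$. In particular $T^+$ is square-free of degree at most $2$. By construction $T^-$ is a monomial of degree at most $3$. So $\nu_\fV(B)$ starts out bounded, and it takes values in a well-ordered set ($\NN^2$ with the lexicographic order).

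Next consider one $\wp$-blowup $(\wp_{k\tau}\fr_\mu\fs_h)$. Its center is $D^+\cap D^-$, where $D^\pm$ are the global divisors of factors $y^\pm$ of $T^\pm_{\fV',B}$. Lemma \ref{higher-mul-dec} gives two possibilities on every chart $\fV$ lying over $\fV'$.

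In the first case, $\deg T^+$ drops strictly, so the first coordinate of $\nu$ drops.

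In the second case, $\deg T^+$ is unchanged and $(y^-)^m$ is replaced by $\zeta_{E_*}^{m-1}$. Since $\zeta_{E_*}$ is the equation of a new exceptional divisor, it is a fresh factor, distinct from the remaining factors of $T^-$. The excess $\sum(m_y-1)$ therefore drops by exactly $1$ if $m\ge 2$, and stays the same if $m=1$. When $m=1$, however, $y^-$ disappears from $T^-$, so the factor $y^-$ has been eliminated.

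Charts on which $D^+\cap D^-$ is not met are unchanged. A blowup whose center is disjoint from $\tsV_*$ only changes things away from $\tsV_*$, and termination is measured along $\tsV_*$. Hence $\nu$ never increases.

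The induction then runs as follows. A full round $(\fr_\mu)$ runs through every pair $(D^+,D^-)$ that is still relevant. The claim is that after a round, on every chart where $B$ has not terminated, either $\deg T^+$ has dropped or the maximal multiplicity in $T^-$ has dropped. Granting this, after finitely many rounds every chart reaches one of two states.

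In the first state, $\deg T^+=0$. Then $T^+$ is a unit, so $T^-$ is a unit along $\tsV_*$ because $B$ vanishes there, and $B$ terminates.

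In the second state, both $T^\pm$ are square-free. Lemma \ref{mul=1=1roundterm} then finishes with one more round.

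Finiteness follows because $\nu$ lies in a well-ordered set, only finitely many charts and divisors are involved at each stage, and each round strictly lowers $\nu$ on every non-terminated chart.

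The main obstacle is the claim that one full round strictly lowers $\nu$ on every non-terminated chart. The difficulty is that a later blowup in the same round must not re-raise a multiplicity that an earlier step lowered, for example by using the new $\zeta_{E_*}$ as a later $y^-$. To rule this out, I would fix the total order on $\cZ_{(\wp_{k\tau})\fr_\mu}$ so that centers involving the highest-multiplicity $y^-$ come first. I would then check chart by chart, using that $\zeta_{E_*}$ enters with exponent $m-1<m$, that the maximal multiplicity in the round is reduced.
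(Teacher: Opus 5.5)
Your strategy is the paper's. There the corollary is stated simply as a consequence of two lemmas. Lemma \ref{higher-mul-dec} says each $\wp$-blowup either lowers $\deg T^+$ or trades $(y^-)^m$ for $\zeta_{E_*}^{m-1}$, so repeated rounds push the multiplicities of $T^-$ down to one. Lemma \ref{mul=1=1roundterm} says one further round then terminates $B$. Your lexicographic invariant $(\deg T^+,\sum(m_y-1))$, together with the well-foundedness argument, is a faithful write-up of that ``consequently''.

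However, the step you flag as the main obstacle is left unproved, and the fix you propose is both unnecessary and harmful.

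It is unnecessary because the set $\cZ_{(\wp_{k\tau})\fr_\mu}$ is fixed at the start of the round from divisors already present. So no exceptional divisor created during the round ever serves as a $y^-$ in that round. Moreover, consider the chart where $y^-$ becomes the exceptional coordinate $\zeta_{E_*}$; this is the only chart where $\deg T^+$ survives. There the proper transform of $D^-$ is empty. Hence every later center of the round involving the same $y^-$ misses that chart, and nothing in the round touches $\zeta_{E_*}^{m-1}$ again. A multiplicity can grow only on the complementary chart, where $y^+$ became exceptional and $\deg T^+$ dropped, and your lexicographic order absorbs that. So along any chain of charts on which $\deg T^+$ never drops during the round, each factor of $T^-$ present at the start is hit exactly once. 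It is replaced by a fresh factor of multiplicity one less, so the excess drops by the number of factors of multiplicity $\ge 2$. This is exactly your missing claim, and it holds for every ordering of the centers.

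It is harmful because the order is not free. In the proof of Theorem \ref{wp-ell-blowup}(1) the paper orders the centers by their $y^+$-factor: those containing $x_{(\uu_s,\uv_s)}$ come last and those containing $x_{\uu_F}$ second to last. This is what keeps the pleasant variables in the final $T^+_{\fV,B}$, on which the Jacobian computation rests. Putting the centers with the highest-multiplicity $y^-$ first conflicts with this whenever such a center contains $x_{(\uu_s,\uv_s)}$. You would then be proving termination for a blowup sequence different from the one the paper actually uses.
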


\begin{comment}
Given any divisor $D$ of $\tsR_(\wp_{k\tau}\fr_\mu\si_{h-1}))$,
we let $D_{(\wp_{k\tau}\fr_\mu\si_h)}$ be its proper transform in 
$\tsR_(\wp_{k\tau}\fr_\mu\si_{h}))$, for example, $X_{(\wp_{k\tau}\fr_\mu\si_h), \uu}$ is
the proper transform of the $\vp$-divisor $X_\uu$. The same applies to all other symbols.
Thus, $\cD_{(\wp_{k\tau}\fr_\mu\si_h)}$ consists of the proper transforms of 
$\cD_{(\wp_{k\tau}\fr_\mu\si_{h-1})}$ together with the exceptional divisor 
$\cD_{(\wp_{k\tau}\fr_\mu\si_{h})}$ of the blowup
$$f_{(\wp_{k\tau}\fr_\mu\si_{h})}: \tsR_{(\wp_{k\tau}\fr_\mu\si_{h})} \lra \tsR_{(\wp_{k\tau}\fr_\mu\si_{h-1})}.$$
\end{comment}

Consider any universal blowup $\sR_* \lra \sR_{*-1}$. 
Locally over a chart $\fV'$ of $\sR_{*-1}$, the blowup is always  along
a co-dimensional two center
$(y^+=0) \cap (y^-=0)$. % where $y^\pm \mid T_{\fV',B}^\pm$ for 
%some$B \in \cB^\gov$. 
For any chart $\fV$ of $\sR_*$,
 $\fV \subset \fV' \times \PP^1_{[\xi,\eta]}$ corresponds to
 $(\xi \ne 0)$ or $(\eta \ne 0)$. 

\begin{lemma}\label{term-charts} Suppose $B_\fV$ terminates, then
$\tsV_* \cap \fV$ is covered by $(\xi \ne 0) \cap (\eta \ne 0)$.
\end{lemma}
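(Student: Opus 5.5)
Since the blowup $\tsR_* \lra \tsR_{*-1}$ is locally along the codimension-two center $(y^+=0)\cap(y^-=0)$ in a chart $\fV'$, I will work in the standard blowup charts. Write $\fV \subset \fV'\times \PP^1_{[\xi,\eta]}$, with $\xi y^- = \eta y^+$ on the blowup. The complement of $(\xi\ne0)\cap(\eta\ne0)$ inside $\fV$ is contained in the union of $(\xi=0)$ and $(\eta=0)$. So the task is to show that $\tsV_*\cap\fV$ meets neither of these loci.

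\textbf{Step 1: identify the two boundary loci.} In the chart $(\xi\ne0)$ we set $\xi=1$. Then $y^+=\zeta$ is the exceptional coordinate, $y^-=\zeta\eta$, and $(\eta=0)$ is exactly the proper transform $\widehat{D}_-$ of $D_-$. Symmetrically, in the chart $(\eta\ne0)$ the locus $(\xi=0)$ is the proper transform $\widehat{D}_+$ of $D_+$. It therefore suffices to show that $\tsV_*$ avoids $\widehat D_+$ and $\widehat D_-$, at least over $\fV$.

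\textbf{Step 2: use termination.} By hypothesis $B_\fV$ terminates, so by Definition~\ref{termi} both $T^\pm_{\fV,B}$ are invertible along $\fV\cap\tsV_*$. The key point is that the proper transform of $y^+$ (respectively $y^-$) appears as a factor of $T^+_{\fV,B}$ (respectively $T^-_{\fV,B}$) in the appropriate chart. This is because $y^\pm$ was chosen as a factor of $T^\pm_{\fV',B}$, and passing to the proper transform divides out only powers of $\zeta$.

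\begin{itemize}
\item In the chart $(\xi\ne0)$ we have $y^-=\zeta\eta$. Dividing the binomial by $\zeta$ leaves $T^-_{\fV,B}$ containing the factor $\eta$ (raised to power $m\ge1$), by the multiplicity bookkeeping of Lemma~\ref{higher-mul-dec}.
\item Symmetrically, in the chart $(\eta\ne0)$, $T^+_{\fV,B}$ contains the factor $\xi$.
\end{itemize}

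Invertibility of $T^\pm_{\fV,B}$ along $\tsV_*\cap\fV$ then forces $\eta\ne0$ (respectively $\xi\ne0$) there. Hence $\tsV_*\cap\fV \subset (\xi\ne0)\cap(\eta\ne0)$. Once this holds in one chart, it holds for the whole of $\tsV_*\cap\fV$, since the two charts agree on the overlap.

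\textbf{Main obstacle.} The delicate point is the second bullet of Step 2: confirming that after the blowup the new coordinate $\eta$ (or $\xi$) genuinely survives as a factor of the proper transform $T^\mp_{\fV,B}$, rather than being absorbed. If $y^-$ occurs with multiplicity $m$ in $T^-_{\fV',B}$, the transform is $\zeta^{m-1}\eta^m\cdot(\text{rest})$. So $\eta$ always persists with positive exponent, while only the $\zeta$-power drops, which is exactly consistent with Lemma~\ref{higher-mul-dec}.

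I would verify this by writing $B=T^+-T^-$ explicitly in each chart, factoring out the single $\zeta$, and reading off the monomials. One caveat needs checking: termination is asserted for the proper transform of $B$, not for its total transform. This means one must confirm that the proper transform is obtained by dividing by exactly $\zeta^1$. That holds because $T^+_{\fV',B}$ is square-free, so $y^+$ appears to first power only.
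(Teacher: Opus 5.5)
Your proposal is correct and is the chart computation the paper has in mind when it says the lemma is clear. In the chart $(\xi\ne0)$ the proper transform of $y^-$ is $\eta$ and divides $T^-_{\fV,B}$, while in the chart $(\eta\ne0)$ the proper transform of $y^+$ is $\xi$ and divides $T^+_{\fV,B}$; so invertibility of $T^\pm_{\fV,B}$ along $\tsV_*\cap\fV$ forces $\xi\eta\ne0$ there. Your closing remark about the two charts agreeing on the overlap is unnecessary, since $\fV$ is a single chart and the one-chart argument already gives the claim.
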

\begin{proof} This is clear.
\end{proof}

\begin{comment}
\begin{cor}\label{Tplus=known} 
When the proper transform of $B_*$ terminates on $\tsR_*$, 
we can choose charts $\{\fV\}$ to cover $\tsV_*$ such that $T_{*,B}^+$ takes 
the following two forms
$$T_{\fV, B}^+= y_{\fV, \uu_F} x_{\fV, (\uu_s,\uv_s)}, \; T_{\fV,B}^+= x_{\fV, (\uu_s,\uv_s)}$$
where $y_{\fV,\uu_F}$ is a transform of  $x_{\uu_F}$. Further, 
 $y_{\fV, \uu_F}$ and $x_{\fV, (\uu_s, \uv_s)}$  are pleasant variables.
\end{cor}
\end{comment}

%The $\wll$-blowups will be explain the proof the following theorem.

We let $\tsR_\ell$ be the final blowup scheme and $\tsV_\ell$ the proper transform of $\sV$,
where $\ell$-blowups will be seamlessly introduced in the proof of the theorem below.

\begin{thm}\label{wp-ell-blowup} {\rm (Chapter 6, \cite{Hu2025a})}  The final blowup scheme
$\tsV_\ell$  can be covered by  smooth  charts $\{\fV\}$ of $\tsR_\ell$
such that the following holds. 
\begin{enumerate}
\item For  any governing binomial $B$ in $\cB^\gov$, we have either
$$T_{\fV, B}^+=  x_{\fV, (\uu_s,\uv_s)}
 \;\; \hbox{or} \;\; T_{\fV,B}^+=y_{\fV, \uu_F} x_{\fV, (\uu_s,\uv_s)}$$
where $y_{\fV,\uu_F}$ is a transform of  $x_{\uu_F}$. Importantly, 
 $y_{\fV, \uu_F}$ and $x_{\fV, (\uu_s, \uv_s)}$  are
 both pleasant variables.\footnote{Hence we have
complete control of $T_B^+$, though
  little for the minus term $T_B^-$.
Thereby we can compute  $\Jac(\fG)$  because we
know explicitly the final forms of $T_B^+$.}
\item For any $F \in \sF$,  $L_{\fV, F}$ takes one of the following two forms:
\begin{equation}\label{case-alpha}\nonumber
(\alpha) \;\; 
L_{\fV, F} = \sgn(s_F) x_{\fV, (123,\uu_F)}+ \sum_{i=1}^l \sgn(s_i) x_{\fV, (\uu_{s_i},\uv_{s_i})} + 
 \sum_{j=1}^q \sgn(s_j) x^*_{\fV, (\uu_{s_j},\uv_{s_j})}
\end{equation}
where $x^*_{\fV, (\uu_{s_j},\uv_{s_j})}$ denotes the pull-back; 
\begin{equation}\label{case-beta}\nonumber
(\beta) \;\;\;\;\;\; 
L_{\fV, F} = \sgn(s_F)  y_{\fV, (123,\uu_F)} +1
\end{equation}
where $y_{\fV, (123,\uu_F)}$ is the proper
transform of the local parameter $\de_{(123,\uu_F)}$ for $E_{\vt,F}$.
\end{enumerate}
\end{thm}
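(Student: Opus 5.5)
The plan is an induction on the total order of $\cB^\gov$ from Definition \ref{order-cBgov}, starting from the charts of $\tsR_\vt$ provided by Theorem \ref{vt-blowup}. There, for every $B\in\cB^\gov$, we already have $T^+_{\fV,B}=x_{\fV,\uu_F}x_{\fV,(\uu_s,\uv_s)}$, and $L_{\fV,F}$ is of the form ($\alpha$) or ($\beta$) of Theorem \ref{vt-blowup}(4). For the phase $(\wp_{k\tau})$, with $B=B_{k\tau}$, I would run rounds $(\fr_\mu)$ of $\wp$-blowups along the ordered centers $\cZ_{(\wp_{k\tau})\fr_\mu}$.

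For part (1) I would argue in three steps. First, by Lemma \ref{higher-mul-dec}, each step either strictly lowers $\deg T^+$ or lowers the multiplicity of the chosen factor $y^-$ in $T^-$, while $T^+$ stays square-free. So after finitely many rounds $T^-$ becomes square-free, and by Lemma \ref{mul=1=1roundterm} one more round makes $B$ terminate. Second, I would use Lemma \ref{term-charts}: once $B$ terminates, $\tsV_*$ is covered by charts where both $\xi$ and $\eta$ are nonzero. On such charts, whenever $y^+$ was chosen to be the transform of $x_{\uu_F}$, that factor becomes a unit. This is why $T^+_{\fV,B}$ is either $x_{\fV,(\uu_s,\uv_s)}$ alone or $y_{\fV,\uu_F}x_{\fV,(\uu_s,\uv_s)}$. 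Third, I would check pleasantness: blowups in phases for $F'<F$ only use centers built from factors of $T^\pm_{B'}$ with $B'\in\cB^\gov_{F'}$. By Proposition-Definition \ref{pleas}, these never involve $x_{\uu_F}$ or the $\vr$-variables of $\cB^\gov_F$. So the transforms of those variables stay coordinates that do not occur in $\fG_{F'}$, i.e. they remain pleasant. Later phases $F''>F$ may blow up along divisors containing these variables. I would handle this by showing that on the covering charts the new coordinate $\zeta_E$ replaces the old one in the same role, so it is again absent from every $\fG_{F'}$ with $F'<F$. The $\ell$-blowups would be inserted precisely where needed, as blowups along intersections of the $\fL$-divisor $D_F$ with a $\vr$-divisor.

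For part (2) I would split into the two cases of Theorem \ref{vt-blowup}(4). In case ($\beta$), $E_{\vt,F}\cap\fV\neq\emptyset$ and $L_{\fV,F}=\de_{(123,\uu_F)}+\sum\sgn(s)x_{\fV,(\uu_s,\uv_s)}$. Since the $\vr$-coordinates of $\PP_F$ are de-homogenized, one of them equals $1$. After the later $\wp$- and $\ell$-blowups, I would show the remaining $\vr$-terms are absorbed: on the covering charts each is either a unit multiple of an exceptional parameter or has been eliminated via $L_F$. This should give the form $\sgn(s_F)y_{\fV,(123,\uu_F)}+1$. In case ($\alpha$), $L_F$ stays linear in its $\vr$-coordinates. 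Its terms split into genuine proper transforms (the $x_{\fV,(\uu_{s_i},\uv_{s_i})}$) and pull-backs (the $x^*$ terms), according to whether the corresponding $\vr$-divisor was a blowup center in a chart not meeting the exceptional locus.

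The main obstacle is choosing the total order on $\cZ_{(\wp_{k\tau})\fr_\mu}$ so that the centers are globally defined codimension-two smooth loci. The order must also keep each blowup from destroying the explicit shape of earlier $T^+_{B'}$ and of the $L_F$. My first attempt would order the centers lexicographically by (exponent of $y^-$ in $T^-$, position of the divisor in $\cD_{\sR_*}$), putting exceptional divisors last. I would then verify chart by chart that previously terminated binomials stay terminated, since the centers avoid the locus where their $T^\pm$ are units.
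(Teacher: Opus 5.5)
Your termination argument (Lemmas \ref{higher-mul-dec} and \ref{mul=1=1roundterm}) and your use of Proposition-Definition \ref{pleas} match the paper. However, the mechanism that produces the dichotomy in (1) is missing.

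Your step (ii) explains why the transform of $x_{\uu_F}$ may drop out of $T^+_{\fV,B}$. Nothing in your argument stops the same thing from happening to the $\vr$-factor. Schematically, write $B=y_1y_2-z_1z_2$ with $y_1\leftrightarrow x_{\uu_F}$ and $y_2\leftrightarrow x_{(\uu_s,\uv_s)}$. Blowing up $(y_2=0)\cap(z_1=0)$ first produces a chart with $B'=y_1-z_1'z_2$. That chart is needed to cover $\tsV_*$, and in it $T^+$ has lost its $\vr$-factor. What remains is $y_{\fV,\uu_F}$ alone (or eventually a unit), which is neither of the allowed forms. It would also break Theorem \ref{tZga=smooth}, whose minor needs a separate pleasant column $x_{(\uu_{s_i},\uv_{s_i})}$ for each $B_{s_i}$.

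The paper's key device is to order the centers of each round by their \emph{plus} divisor. The $\vr$-variable $x_{(\uu_s,\uv_s)}$ is declared the largest and $x_{\uu_F}$ the second largest, so all centers involving the $\vr$-divisor are blown up last. The leading variable is treated first; its transform may or may not survive, according to whether the $\vr$-variable is invertible. When $B$ terminates, Lemma \ref{term-charts} lets one cover $\tsV_*$ by charts on which $x_{(\uu_s,\uv_s)}$ is retained in $T^+$. Your order, lexicographic in the exponent of $y^-$ and the position of the divisor in $\cD_{\sR_*}$ with exceptional divisors last, is keyed to the minus side and does not achieve this.

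For (2), you have the wrong $\ell$-center. In the paper the $\ell$-blowup is a single blowup per $F$, performed after every binomial of $\fG_F$ has terminated. Its center is $E_{\wp_F,F}\cap D_{\wp_F,F}$, the intersection of the proper transforms of the $\vt$-exceptional divisor $E_{\vt,F}$ and of $D_F=(L_F=0)$.

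On charts missing $E_{\wp_F,F}$, form $(\alpha)$ is inherited from Theorem \ref{vt-blowup}(4)$(\alpha)$, the unstarred terms being the $\vr$-variables invertible on the center. On charts meeting it, $L_F=\sgn(s_F)\de_{(123,\uu_F)}+\cdots$; blowing up $(\de_{(123,\uu_F)}=0)\cap(L_F=0)$ and arguing as in Lemma \ref{term-charts} gives $(\beta)$.

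A center $D_F\cap X_{(\uu,\uv)}$ with a $\vr$-divisor never involves $\de_{(123,\uu_F)}$. So it cannot produce a form written through the proper transform of $\de_{(123,\uu_F)}$, which is what $(\beta)$ asserts. It would also interfere with the $\vr$-variables of $L_F$, and these are exactly the ones you must keep in $T^+$ by (1). Your assertion that the remaining $\vr$-terms are ``absorbed'' has no mechanism behind it.
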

\begin{proof}
(1). Consider any $B \in \cB^\gov_F$ for some $F \in \sF$. Write
$$B= x_{\uu_F} x_{(\uu_s,\uv_s)} - x_{\uu_s} x_{\uv_s} x_{(123, \uu_F)}.$$
When the blowups are performed for the blocks $\fG_{F'}$ with $F' < F$, the
plus term $$T_B^+= x_{\uu_F} x_{(\uu_s,\uv_s)}$$  is not affected and remains the same form.
When the block $\fG_F$ is treated, we order the centers of the $\wp$-blowups
 by declaring  the $\vr$-variable
$x_{(\uu_s,\uv_s)}$ being the largest, and $x_{\uu_F}$ the second largest. That is to say,
 the centers having the $\vr$-variable
$x_{(\uu_s,\uv_s)}$ are blown-up last. This way, when the binomial $B$ terminates, we retain the
 $\vr$-variable $x_{(\uu_s,\uv_s)}$ in $T_B^+$, due to Lemma \ref{term-charts}.
The leading variable $x_{\uu_F}$, being the second largest, its transform may or may not remain, 
 depending on whether the $\vr$-variable $x_{(\uu_s,\uv_s)}$ is invertible,
so we have
the two cases as stated.\footnote{Pleasantly, these are the pleasant variables that we use to compute the final Jacobian, \S \ref{jjj}.}
 
(2)  When the governing binomials of $\fG_F$ all terminate, 
we obtain the final $\wp$-blowup  scheme for that block
$$\tsV_{\wp_F} \subset \tsR_{\wp_F},$$
equipped with the proper transform 
$E_{\wp_F, F}$ of $E_{\vt, F}$ and the proper transform $D_{\wp_F, F}$ of $D_F=(L_F=0)$.
We  move on to consider $L_F$. 
We cover  $\tsV_{\wp_F}$ by smooth charts $\{\fV'\}$ of $\tsR_{\wp_F}$.
 When  $E_{\wp_F, F} \cap \fV'=\emptyset$, by Theorem \ref{vt-blowup} (4) $(\alpha)$,
%over any  chart $\fV$ lying over $\fV'$,  
$L_F$ takes of the form
$$L_{\fV', F} = \sgn(s_F) x_{\fV', (123,\uu_F)} + \sum_{s \in S_F \- \{s_F\}} \sgn(s) x^*_{\fV', (\uu_s,\uv_s)}.$$
%as we always have $X_{\vt, (123,\uu_F)} \cap \tsV_\vt =\emptyset$.
When $E_{\wp_F, F} \cap \fV' \ne \emptyset$, by Theorem \ref{vt-blowup} (4) ($\beta$),
$$L_{\fV', F} = \sgn(s_F) \de_{\fV',(123,\uu_F)} + 
\sum_{s \in S_F \- \{s_F\}} \sgn(s) x^*_{\fV', (\uu_s,\uv_s)}$$
where $\de_{\fV', (123,\uu_F)}$  is a coordinate of $\fV'$ such that
$E_{\wp_F, F}\cap \fV' =(\de_{\fV', (123,\uu_F)}=0)$.

Now, we blow up $ \tsR_{\wp_F}$ along
$E_{\wp_F, F} \cap D_{\wp_F, F}$ to obtain the $\ell$-blowup: 
$$ \tsR_{\ell_F} \lra \tsR_{\wp_F}.$$ 

Consider any chart $\fV$ of $ \tsR_{\ell_F} $ lying over $\fV'$.

The first case in the above discussion 
implies (2) ($\alpha$),  where we let $x_{\fV', (\uu_{s_i},\uv_{s_i})}, 1 \le i \le l$,
be all the $\vr$-variables that are invertible on the blowup center. 

In the second case, 
by applying (the same argument of) Lemma \ref{term-charts},
(2) ($\beta$) follows.
\end{proof}

\section{Resolution of singularities}

\subsection{Birational transforms of singularities and equations} $\ $

For any intermediate blowup $\pi: \tsR_* \lra \tsR_{*-1}$ and a chart $\fV'$ of $\tsR_{*-1}$, 
$\pi^{-1}(\fV') \lra \fV'$ is the blowup along a locus $(y_0'=0) \cap (y_1'=0)$.

\begin{lemma}\label{for-slice}\footnote{This lemma guarantees 
 the existence of birational transform of $Z_\Ga$.}
For any chart $\fV$ of $\tsR_* $ lying over $\fV'$, 
we can assume  $y_i'$ turns into the exceptional variable
$\zeta$ for some $i \in \{0, 1\}$ and
$y_j$ is the proper transform of  $y_j'$ with $j=\{0,1\}\-\{i\}$.
Then for any binomial $B \in \cB^\gov \sqcup  \cB^\ngv$
and any term $T_{\fV,B}$ of $B_\fV$,
if $y_j \mid T_{\fV, B}$, then either $T_{\fV, B}$ is linear in $y_j$,
 or else, $\zeta \mid T_{\fV, B}$.
\end{lemma}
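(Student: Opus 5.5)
The plan is to compute the local form of the blowup explicitly and track a single term. Over $\fV'$ the center is $(y_0'=0)\cap(y_1'=0)$, and both $y_0',y_1'$ belong to the affine coordinate system $\var_{\fV'}$; indeed each of them cuts out a divisor of $\cD_{\sR_{*-1}}$. The chart $\fV$ of $\tsR_*$ is then, say, $(\xi\neq 0)$ in $\fV'\times\PP^1_{[\xi,\eta]}$. On this chart the coordinates are obtained from those of $\fV'$ by the substitution
$$y_i'=\zeta,\qquad y_j'=\zeta\, y_j,$$
with all other coordinates of $\fV'$ unchanged. So $\zeta$ is the exceptional variable and $y_j$ is the proper transform of $y_j'$.

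Fix a binomial $B\in\cB^\gov\sqcup\cB^\ngv$ and a term $T_{\fV',B}$ of $B_{\fV'}$. I will argue by induction along the sequence of universal blowups. The base case is $\sR$, where every term of every binomial in $\cB^\gov\sqcup\cB^\ngv$ is square-free; this is built into Theorem \ref{universal-eqs}, which says the binomials are square-free and multi-homogeneous. Write $T_{\fV',B}=(y_0')^{a}(y_1')^{b}M$ with $M$ free of $y_0',y_1'$. After substitution the total transform is
$$\zeta^{a+b}\,y_j^{\,e_j}\,M,$$
where $e_j$ is the exponent of $y_j'$ in $T_{\fV',B}$ (that is, $a$ or $b$). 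The proper transform $B_\fV$ is obtained by dividing both terms by $\zeta^{c}$, where $c=\min(a+b,a'+b')$ is the common power of $\zeta$ coming from the two terms.

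The claim is that if $y_j$ divides $T_{\fV,B}$ with multiplicity $e_j\ge 2$, then $\zeta$ still divides $T_{\fV,B}$. I will prove this by strengthening the induction to carry the following invariant: \emph{whenever a variable appears in a term with exponent $\geq2$, that term is also divisible by the exceptional variable of the blowup that created the multiplicity.} This is exactly the mechanism in Lemma \ref{higher-mul-dec}, where $(y_-)^m$ is replaced by $(\zeta_{E_*})^{m-1}$.

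The argument then splits into two cases according to how the center relates to $B$.

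\emph{Center built from $B$ itself.} Here $y_0'$ divides $T^+$ and $y_1'$ divides $T^-$. The term $T^+$ is square-free by Lemma \ref{higher-mul-dec}, so in $T^+$ we have $e_j\le1$. In $T^-$, a high power of $y_j$ can only come from $e_j\ge2$, and then the $\zeta$-exponent of $T^-$ is $a+b\ge e_j+1$. The division removes at most $c\le a^++b^+$, and $a^++b^+\le 1$ when $T^+$ is divisible by only one of $y_0',y_1'$. Hence $\zeta$ survives in $T^-$.

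\emph{Center coming from another binomial.} This is the main obstacle, because both terms of $B$ may be divisible by $y_0'$ and $y_1'$ to higher powers. Then $c$ can be large, and $\zeta$ might be cancelled completely from the term that carries $y_j^{e_j}$. To handle this I will use multi-homogeneity: $B$ is linear in the $\vr$-variables of each $\PP_F$ involved, so the two terms have equal degree in each factor. The divisors $D^\pm$ are either $\vp$-divisors, $\vr$-divisors or exceptional divisors, and I would track how each of these types can occur in the two terms.

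The key point to verify is that $e_j\ge2$ forces $a+b>c$ for that term, i.e.\ the term carrying $y_j^{e_j}$ has strictly larger $\zeta$-exponent than the other term. First I would check this directly for the $\vt$-blowups, where the pair $(x_{\uu_F},x_{(123,\uu_F)})$ appears with exponent at most one in each binomial. Next I would check the $\ell$-blowups, whose centers involve $L_F$ and exceptional divisors rather than binomial factors. Finally I would run the general step inductively, using the invariant together with the order in which the centers were chosen.

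If the inequality fails in some configuration, the fallback is to show that $y_j$ then divides both terms. In that case it would be factored out of the proper transform, so that the offending multiplicity never occurs in $T_{\fV,B}$.
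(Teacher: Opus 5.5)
\textbf{There is a genuine gap.} Your chart substitution and your first case (center built from $B$ itself) are fine. The gap is in the second case, and that case is where the lemma actually lives. It must hold at once for every binomial in $\cB^\gov\sqcup\cB^\ngv$: all of $\cB^\ngv$ and all governing binomials of the other blocks. It must also hold for the $\ell$-blowups. For this case your proposal only lists what would have to be checked:
\begin{itemize}
\item The invariant you propose to carry essentially restates the conclusion, and you never show it is preserved.
\item Multi-homogeneity cannot supply $a+b>c$. It only constrains $\vr$-variables, whereas the variables that can acquire exponent $\ge 2$ are the exceptional ones.
\item The fallback fails as stated. The proper transform $B_\fV$ removes only the power of $\zeta$, as in Lemma \ref{higher-mul-dec}, not a common factor $y_j$ of both terms.
\end{itemize}

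\textbf{The missing idea is the one the paper's proof rests on.} For $\vt$- and $\wp$-blowups, one of the two center variables, say $y^+$, is always a factor of the plus term $T^+$ of some governing binomial. It is a pleasant transform of a leading variable $x_{\uu_F}$ or of a $\vr$-variable $x_{(\uu_s,\uv_s)}$, over which one has complete control. In particular, it occurs with exponent at most one in every term of every binomial, not only in the binomial that produced the center. This settles all binomials uniformly:
\begin{itemize}
\item On the chart where $y^+$ survives as $y_j$, every term is linear in $y_j$.
\item On the chart where $y^+=\zeta$ and $y^-=\zeta y_j$, suppose a term $T$ contains $y_j^{b}$ with $b\ge 2$. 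Under each blowup, every non-exceptional coordinate keeps its exponent in every term, and on $\sR$ all terms are square-free. So $y^-$ must be a transform of an exceptional variable.
\item Such a variable divides only one term of each binomial. Right after its creation one term is free of it, and later blowups preserve exponents of proper transforms.
\item Hence $T$ has $\zeta$-exponent $\ge b\ge 2$, while the other term has $\zeta$-exponent at most $1$. So $\zeta$ survives in $T_{\fV,B}$.
\end{itemize}
Your first case is the special instance where the center comes from $B$ itself. What you missed is that the control of $T^+$ works for all binomials simultaneously. For the $\ell$-blowups, whose centers involve $L_F$ rather than binomial factors, the paper appeals to the analogous control from Theorem \ref{wp-ell-blowup}. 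Your plan leaves that case unaddressed as well.
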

\begin{proof}
For $\vt$- and $\wp$-blowups,  the 
lemma holds  because one of the variables of $y_0'$ and $y_1'$ arises
from the plus term $T_B^+$ of some $B \in \cB^\gov$ and we have complete control of any proper transform of $T_B^+$. Similar reasoning applies to $\ell$-blowups.
\end{proof}

\begin{thm}\label{final-eqs-Gamma} {\rm (Corollary 7.6, \cite{Hu2025a})}
Given  any subset  $\Gamma \subset \var_\bU$
such that $Z_\Gamma$ is integral, then the following hold
\begin{itemize}
\item there exists a closed subscheme $\tZ_{\ell,\Ga}$ of $\tsV_\ell$
with an induced morphism 
 $$\tZ_{\ell,\Ga} \to Z_\Ga;$$
\item $\tZ_{\ell,\Ga}$ comes equipped with an irreducible component  
$\tZ^\dagger_{\ell,\Ga}$ with the induced surjective morphism 
$\tZ^\dagger_{\ell, \Ga}  \to Z_\Ga$ that is proper and birational;
 \item  for any affine chart $\fV$ of $\sR_\ell$ as in Theorem \ref{wp-ell-blowup}
such that
$\tZ_{\ell,\Ga} \cap \fV \ne \emptyset$, there  exist subsets, 
$$ \tGa^\zero_{\fV}, \;  \tGa^\one_{\fV} \; \subset \;  \var_\fV$$
such that $\tZ_{\ell,\Ga} \cap \fV$ as a closed subset of $\fV$ is defined by
\begin{eqnarray}\label{uni-eqs-ell} L_{\fV, F},  \; \forall \; F \in \sF; \; B_\fV, \; \forall \; B \in \cB^\gov \sqcup \cB^\ngv \\
 y, \;\forall \;  y \in  \tGa^\zero_{\fV}, \; y -1, \;\forall \;  y \in  \tGa^\one_{\fV}. \nonumber
\end{eqnarray}
\end{itemize}
\end{thm}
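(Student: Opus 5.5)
We argue by induction along the sequence of universal blowups
$$\tsR_\ell \lra \cdots \lra \tsR_\vt \lra \cdots \lra \tsR_{\vt_{F_1}} \lra \sR,$$
with Theorem \ref{universal-eqs-Gamma} as the base case. That theorem gives $Z_{\sF,\Ga} \subset \sV$ with the component $Z^\dagger_{\sF,\Ga}$ mapping properly and birationally onto $Z_\Ga$. Locally, $Z_{\sF,\Ga}\cap\fV$ is cut out by $L_F|_\fV$, by $B|_\fV$ for $B \in \cB^\gov\sqcup\cB^\ngv$, and by the coordinate vanishings $y$, $y\in\tGa^\zero_\fV$. Taking $\tGa^\one_\fV=\emptyset$ at this stage, the statement holds on $\sR$. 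The inductive claim is that the same holds on every intermediate $\tsR_*$. That is, there are a closed subscheme $\tZ_{*,\Ga}\subset\tsV_*$ and an irreducible component $\tZ^\dagger_{*,\Ga}$ mapping properly and birationally onto $Z_\Ga$. Moreover, on each chart the subscheme is defined by the proper transforms $L_{\fV,F}$ and $B_\fV$, together with coordinate conditions $y=0$ and $y-1=0$ on variables in $\var_\fV$.

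For the inductive step, let $\pi:\tsR_*\to\tsR_{*-1}$ be locally the blowup along $(y_0'=0)\cap(y_1'=0)$, and consider a chart $\fV$ over $\fV'$ in which $y_i'$ becomes the exceptional variable $\zeta$. The construction of $\tZ_{*,\Ga}$ splits into cases according to the given slice data on $\fV'$.
\begin{enumerate}
\item If neither $y_0'$ nor $y_1'$ lies in $\tGa^\zero_{\fV'}$, then the center meets $\tZ^\dagger_{*-1,\Ga}$ properly or not at all. We take the proper transform, and the coordinate conditions pull back to conditions on the same-named variables. A condition $y'-1=0$ with $y'=y_j'$ stays $y_j-1=0$ after dividing by the unit.
\item If some $y_i'\in\tGa^\zero_{\fV'}$, the center may contain the slice, so we cannot use the proper transform. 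Instead we define $\tZ_{*,\Ga}\cap\fV$ by replacing the condition $y_i'=0$ with $\zeta=0$, or by imposing $y_j=0$ or $y_j=1$ on the ratio variable in the other chart. The choice $y_j=1$ is forced when both $y_0',y_1'\in\tGa^\zero_{\fV'}$: we then pick a generic direction in the $\PP^1$-fiber, normalized to $1$. This is the origin of $\tGa^\one_\fV$.
\end{enumerate}
These local choices must glue to a global closed subscheme. We get this because every center is an intersection $D^+\cap D^-$ of global divisors in $\cD_{\sR_{*-1}}$, so each choice is expressed through global divisors and their proper transforms.

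To confirm that the binomials are still cut out by their proper transforms, we use Lemma \ref{for-slice}. Let $T_{\fV,B}$ be a term with $y_j\mid T_{\fV,B}$. Either $T_{\fV,B}$ is linear in $y_j$, so setting $y_j$ to $0$ or $1$ introduces no nonreduced or embedded behavior, or $\zeta\mid T_{\fV,B}$, so the factor is already absorbed by the exceptional condition. Hence the total transform of $B$ restricted to the slice equals $B_\fV$ up to a factor invertible on the slice. The same holds for $L_{\fV,F}$, using the explicit forms in Theorem \ref{vt-blowup}(4) and Theorem \ref{wp-ell-blowup}(2). Birationality and properness of $\tZ^\dagger_{*,\Ga}\to\tZ^\dagger_{*-1,\Ga}$ follow for the following reason. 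On the dense open set where all $\vp$- and $\vr$-variables not in $\tGa^\zero$ are invertible, the map is an isomorphism, and we take $\tZ^\dagger_{*,\Ga}$ to be the closure of the preimage of that set. Surjectivity then comes from properness.

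The main obstacle is the case where the center is contained in the slice: that is, both $y_0'$ and $y_1'$ vanish identically on $\tZ^\dagger_{*-1,\Ga}$, or the center contains its generic point. There we must show that the fiber direction chosen via $\tGa^\one$ still yields an irreducible component that is birational onto the previous one, and not merely a subvariety of the exceptional divisor. First we would show, using Lemma \ref{for-slice} and the pleasantness of $y^+$ from Proposition-Definition \ref{pleas}, that a generic point of $\tZ^\dagger_{*-1,\Ga}$ never lies in such a center. Then the slice passes through the center only along a proper closed subset, and the $\tGa^\one$ conditions merely describe which charts meet the strict transform.
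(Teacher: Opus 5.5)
There is a genuine gap, and it sits at what you call the main obstacle. Your plan is to show that a generic point of $\tZ^\dagger_{*-1,\Ga}$ never lies in a blowup center, but this is false. Take $n\ge 5$ and $\Ga=\{x_{145}\}$. The relations in $\sF$ cut out the affine space with coordinates $x_{12u},x_{13u},x_{23u}$, so $Z_\Ga$ is the integral quadric $x_{124}x_{135}-x_{134}x_{125}=0$ there. At its generic point $\Theta$ is defined, and its $\PP_{F_{145}}$-component is $[x_{(123,145)}:x_{(124,135)}:x_{(134,125)}]=[0:t:t]$ with $t=x_{124}x_{135}\neq 0$. Hence $Z^\dagger_{\sF,\Ga}\subset X_{145}\cap X_{(123,145)}=Z_{\vt_{F_{145}}}$, which is the very first center. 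The same happens whenever a leading variable $x_{\uu_F}$ lies in $\Ga$.

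Once $\tZ^\dagger_{*-1,\Ga}$ lies in the center, its strict transform is empty and its preimage is the whole $\PP^1$-bundle over it. So no dense open subset of it maps isomorphically, and your birationality argument has nothing to apply to. Likewise, your claim that the total transform of $B$ equals $B_\fV$ up to a unit on the slice fails: the factor is a power of $\zeta$, which vanishes there. This containment case is not something to rule out; the statement is built for it, and $\tGa^\one_\fV$ would never be needed otherwise.

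The missing idea is the paper's construction of a birational slice of $\tZ_{*-1,\Ga}$ inside the exceptional divisor. Lemma \ref{for-slice} is the tool for this, not a tool for excluding nonreduced structure. On a chart $\fV$, set $\zeta=0$. By the lemma, every term divisible by $y_j$ is either linear in $y_j$ or divisible by $\zeta$. So on the exceptional divisor the equations become affine-linear in the fibre coordinate $y_j$, with coefficients that are functions on $\tZ_{*-1,\Ga}$, or they impose nothing. In the first case you solve at the generic point, which gives a rational section of the $\PP^1$-bundle, and take its closure. In the second case you take the constant section through $[1,-1]\in\PP^1_{[\xi,\eta]}$; this is where $\tGa^\one_\fV$ comes from.

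Your case (2) instead prescribes $y_j=0$ or $y_j=1$ according to membership of $y_0',y_1'$ in $\tGa^\zero_{\fV'}$, which ignores the first case. In the example, take the chart $x_{(123,145)}=\zeta$, $x_{145}=\zeta y$, $x_{(124,135)}=1$. There the proper transform $y-x_{124}x_{135}$ of the first governing binomial pins the slice to $y=x_{124}x_{135}$. Imposing $y=0$ or $y=1$ therefore cuts out a proper closed subset of $\tZ^\dagger_{*-1,\Ga}$, not a birational model of it. Relatedly, the case split should be by whether $\tZ_{*-1,\Ga}$ is contained in the center, not by membership in $\tGa^\zero_{\fV'}$. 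Variables can vanish identically on $\tZ^\dagger_{*-1,\Ga}$ as a consequence of the equations without being recorded there.
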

\begin{proof}
The closed subscheme $\tZ_{\ell,\Ga}$  is constructed inductively.
When an intermediate $\tZ_{{*-1},\Ga}$ is not  contained in the blowup center
of $\tsR_* \lra \tsR_{*-1}$, we simply let $\tZ_{{*},\Ga}$ be the proper transform of
$\tZ_{{*-1},\Ga}$, thereby the subset $\tGa^\zero_{\fV}  \subset  \var_\fV$.
 However, when $\tZ_{{*-1},\Ga}$ is   contained in the blowup center,
we need to construct a birational slice of $\tZ_{{*-1},\Ga}$ in the exceptional divisor,
such a slice exists due to Lemma \ref{for-slice}. Lemma \ref{for-slice} implies that
locally over the chart $\fV$, when setting the exceptional variable $\zeta$
to be zero (since $\pi^{-1}(\tZ_{{*-1},\Ga})$  is contained in the exceptional divisor),
we obtain linear equations in $y_j$, or no constrains at all. In the formal case, we solve
the linear equations generally and take closure; in the latter case, we take the trivial slice
through the point $[1,-1] \in \PP^1_{[\xi,\eta]}$, thereby the subset 
$\tGa^\one_{\fV}  \subset  \var_\fV$.
\end{proof}

\subsection{Jacobian, Jacobian, Jacobian}\label{jjj}

\begin{thm}\label{tZga=smooth} {\rm (Theorem 8.5, \cite{Hu2025a})}
Consider any subset  $\Gamma \subset \var_\bU$
such that $Z_\Gamma$ is integral. Then, $\tZ_{\ell,\Gamma}$ is smooth. In particular,
$\tZ^\dagger_{\ell,\Gamma}$ is smooth, 
\end{thm}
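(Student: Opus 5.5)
We prove smoothness chart by chart. Fix an affine chart $\fV$ of $\tsR_\ell$ as in Theorem \ref{wp-ell-blowup} with $\tZ_{\ell,\Ga}\cap\fV\ne\emptyset$. By Theorem \ref{final-eqs-Gamma}, $\tZ_{\ell,\Ga}\cap\fV$ is cut out by the relations \eqref{uni-eqs-ell}. The plan is to show that the governing relations $L_{\fV,F}$ and $B_\fV$, $B\in\cB^\gov$, together with the coordinate equations $y$ for $y\in\tGa^\zero_\fV$ and $y-1$ for $y\in\tGa^\one_\fV$, already cut out a scheme that is smooth of the expected dimension. We then show that the non-governing relations $B_\fV$, $B\in\cB^\ngv$, impose nothing further along $\tZ_{\ell,\Ga}\cap\fV$. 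Since $\fV$ is smooth, the Jacobian criterion reduces everything to one statement: at every point $\bz\in\tZ_{\ell,\Ga}\cap\fV$, the Jacobian of these relations with respect to $\var_\fV$ has full rank, and its rank equals $\dim\fV-\dim_\bz\tZ_{\ell,\Ga}$.

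\textbf{Step 1: termination.} By the construction of the $\wp$- and $\ell$-blowups, every $B\in\cB^\gov$ terminates on $\tsR_\ell$ (Definition \ref{termi}). So along $\tsV_\ell\cap\fV$, and hence along $\tZ_{\ell,\Ga}\cap\fV$, the term $T^-_{\fV,B}$ is an invertible monomial. By Theorem \ref{wp-ell-blowup}(1), $T^+_{\fV,B}$ is $x_{\fV,(\uu_s,\uv_s)}$ or $y_{\fV,\uu_F}x_{\fV,(\uu_s,\uv_s)}$, and both factors are pleasant variables.

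\textbf{Step 2: the triangular minor.} I would assemble a square minor of $\Jac(\fG_\fV)$ as in \eqref{tri-jac}, block by block in the order of Definition \ref{order-sF}.
\begin{itemize}
\item[(a)] For each $B\in\cB^\gov_F$, take the column of the pleasant variable $x_{\fV,(\uu_s,\uv_s)}$. At $\bz$ the partial derivative of $B_\fV$ in this variable is $1$ or $y_{\fV,\uu_F}$. In the latter case I would use termination: $y_{\fV,\uu_F}x_{\fV,(\uu_s,\uv_s)}=T^-_{\fV,B}$ is a unit, so $y_{\fV,\uu_F}$ is a unit.
\item[(b)] For $L_{\fV,F}$, Theorem \ref{wp-ell-blowup}(2) supplies a linear variable: $x_{\fV,(123,\uu_F)}$ in case $(\alpha)$, or $y_{\fV,(123,\uu_F)}$ in case $(\beta)$. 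Its coefficient is $\pm1$. This variable is distinct from the $\vr$-variables chosen in (a).
\end{itemize}
Pleasantness (Proposition-Definition \ref{pleas}) says these variables do not occur in $\fG_{F'}$ for $F'<F$. This property has to be rechecked after the blowups. I would argue that the centers affecting them are created only while block $F$ is treated, which is exactly the ordering argument in the proof of Theorem \ref{wp-ell-blowup}(1). Granting this, the minor is block lower-triangular with invertible diagonal blocks, hence invertible at $\bz$.

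\textbf{Step 3: compatibility with the coordinate equations.} Each $y\in\tGa^{\zero}_\fV\sqcup\tGa^{\one}_\fV$ contributes a unit row $dy$. I must check that these variables are disjoint from the pivot variables of Step 2. For $\tGa^\one_\fV$ this holds by construction, since these are slice variables $y_j$ in Lemma \ref{for-slice}. For $\tGa^\zero_\fV$ it should follow because the pivot variables are units on $\tZ_{\ell,\Ga}$, by Step 1 and the forms $(\alpha)/(\beta)$. The joint Jacobian therefore has rank equal to $|\fG|+|\tGa^\zero_\fV|+|\tGa^\one_\fV|$.

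\textbf{Step 4: the non-governing relations (main obstacle).} It remains to show that the $B_\fV$ with $B\in\cB^\ngv$ lie, locally at $\bz$, in the ideal generated by the governing and coordinate relations. Then the scheme defined by all of \eqref{uni-eqs-ell} equals the smooth scheme of Step 3. For $\cB^\ngvr$ this is Theorem \ref{vt-blowup}(2). For $\cB^\ngvh$ I would try the following. Once all $T^-_{\fV,B}$ are units, the governing binomials can be solved as $x_{\fV,(\uu_s,\uv_s)}=T^-_{\fV,B}/y_{\fV,\uu_F}$, or the corresponding form without $y_{\fV,\uu_F}$. Substituting this into each square-free element of $\cB^\ngvh$ should make it vanish identically, as the analogue of the dimension computation \eqref{key-dim-cal} on $\cV^0$. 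The difficulty is that a non-governing $\vr$-variable may vanish at $\bz$ after the blowups. I expect this to be settled by using the square-freeness in Theorem \ref{vt-blowup}(3) together with the multi-linearity in the $\vr$-variables, but it is the step I am least sure of.

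Smoothness of $\tZ^\dagger_{\ell,\Ga}$ then follows, because a smooth scheme is a disjoint union of its irreducible components.
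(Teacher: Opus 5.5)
\textbf{Step 2 is a genuine gap: the leading variable is missing from your minor.} The square minor you describe cannot be formed. The block $\fG_F$ has $|S_F|$ members: $L_F$ and one binomial for each $s\in S_F\setminus\{s_F\}$. Your pivots for it are (transforms of) all $|S_F|$ homogeneous coordinates of $\PP_F$, but on every chart one of these is the dehomogenizing coordinate, identically $1$.
\begin{itemize}
\item In case $(\alpha)$ the chart lies over $(x_{(123,\uu_F)}\ne 0)$, so your pivot $x_{\fV,(123,\uu_F)}$ for $L_{\fV,F}$ is not a coordinate at all.
\item In case $(\beta)$ the chart lies over $(x_{(\uu_{s_0},\uv_{s_0})}\ne 0)$ for some $s_0\ne s_F$, so the column you assign to $B_{s_0}$ is the constant $1$.
\end{itemize}
This is structural, not notational. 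Already on $\sR$, every member of $\fG_F$ is homogeneous linear in the coordinates of $\PP_F$. By Euler's relation, their Jacobian with respect to those coordinates alone therefore has rank at most $|S_F|-1$ along the zero locus. Put differently: once the $\vp$-variables are fixed, the binomials pin down the point of $\PP_F$, and $L_F$ then amounts to $F=0$, which must be solved for a $\vp$-variable.

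The missing column is the leading variable $y_{\fV,\uu_F}$. This is why Proposition-Definition \ref{pleas} declares $x_{\uu_F}$ pleasant, and why \eqref{tri-jac} is taken with respect to leading and $\vr$-variables. In the paper's proof:
\begin{itemize}
\item In case $(\beta)$, $y_{\fV,\uu_F}$ pivots $B_{s_0}$, whose plus term is just $y_{\fV,\uu_F}$, and $y_{\fV,(123,\uu_F)}$ pivots $L_{\fV,F}=\sgn(s_F)y_{\fV,(123,\uu_F)}+1$.
\item In case $(\alpha)$, the columns are $y_{\fV,\uu_F}$ and the $\vr$-variables $x_{\fV,(\uu_s,\uv_s)}$, $s\ne s_F$. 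The block is then no longer triangular: the rows $B_{s_i}$ have entries $a_ix_{(\uu_{s_i},\uv_{s_i})}$ in the $y_{\fV,\uu_F}$-column, and the row $L_{\fV,F}$ has entries $\sgn(s_i)$ in the columns $x_{\fV,(\uu_{s_i},\uv_{s_i})}$. Its invertibility must be computed, not read off from units on a diagonal.
\end{itemize}

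\textbf{Step 4 is left open, but it closes without any analysis of $\cB^\ngvh$.} The substitution route you sketch is unnecessary. The paper first runs the Jacobian argument with $\Ga=\emptyset$. The underlying reason it works is a dimension count:
\[
\dim\sR-|\fG|=\binom{n}{3}-1-|\sF|=3(n-3)=\dim\tsV_\ell, \qquad |\sF|=3\binom{n-3}{2}+\binom{n-3}{3}.
\]
Each block has one relation more than $\PP_F$ has affine coordinates, which is exactly what the leading variable absorbs. Near any $\bz\in\tsV_\ell\cap\fV$, the relations $\fG_\fV$ cut out a smooth, hence locally integral, scheme. It has the same dimension as the integral scheme $\tsV_\ell$ it contains, so the two coincide near $\bz$; this is the mechanism of \eqref{key-dim-cal}. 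Consequently every $B_\fV$ with $B\in\cB^\ngv$, since it vanishes on $\tsV_\ell$, lies locally in the ideal generated by $\fG_\fV$.

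Since $\tZ_{\ell,\Ga}\subset\tsV_\ell$, near its points $\tZ_{\ell,\Ga}$ is cut out by $\fG_\fV$ and the equations from $\tGa_\fV$ alone. The corrected minor, computed after restricting to $\tGa_\fV$, then shows it is a smooth local complete intersection. No information about whether non-governing $\vr$-variables vanish at $\bz$ is needed.
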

\begin{proof}
We apply Theorems \ref{wp-ell-blowup} and \ref{final-eqs-Gamma},
and follow their notation.

Fix and consider a block $\fG_F$.

$\bullet$ Case $(\alpha)$ as in Theorem \ref{wp-ell-blowup} (2).

In this case, the chart $\fV$ lies over $(x_{(123,\uu_F)} \ne 0)$.

Following notation of Theorem \ref{final-eqs-Gamma},
we let $\tGa_\fV= \tGa^\zero_{\fV} \sqcup \tGa^\one_{\fV}$.
Using notation of Theorem \ref{wp-ell-blowup} (2) ($\alpha$),
we  introduce the following maximal minor of  $\Jac (\fG_{\fV,F}|_{\tGa_\fV})$
  %$J(\cB^\gov_{\fV,F}|_{\tGa_\fV}, L_{\fV,F}|_{\tGa_\fV})$
%$$J^*(\cB^\gov_{\fV,F}|_{\tGa_\fV}, L_{\fV,F}|_{\tGa_\fV})
$$J^*(\fG_{\fV,F}|_{\tGa_\fV})= {{\partial(B_{\fV, s_1}|_{\tGa_\fV} \cdots B_{\fV, s_{\l}}|_{\tGa_\fV}, B_{\fV, t_1}|_{\tGa_\fV} \cdots B_{\fV, t_q}|_{\tGa_\fV},
L_{\fV,F}|_{\tGa_\fV})} \over {{\partial(y_{\uu_F},
x_{(\uu_{s_1},\uv_{s_1})} \cdots x_{(\uu_{s_{\l}},\uv_{s_{\l}})},
x_{(\uu_{t_1},\uv_{t_1})} \cdots x_{(\uu_{t_q},\uv_{t_q})}
 )}}} .$$
Then, one computes and finds that at a point $\bz$, it is equal to
\begin{eqnarray} \nonumber
% J^*(\cB^\gov_{\fV,F}, L_{\fV,F}|_{\tGa_\fV}) =   
% x_{\uu_F}, x_{(\uu_{s_1},\uv_{s_1})}\cdots x_{(\uu_{s_1},\uv_{s_1}   \\
{\footnotesize
\left(
\begin{array}{cccccccccc}
a_1x_{(\uu_{s_1}, \uv_{s_1})} & a_1y_{\uu_F}   & \cdots & 0 & 0  & \cdots &0 \\
%a_2 x_{(\uu_{s_2}, \uv_{s_2})} & 0 & a_2 x_{\uu_F}  & \cdots & 0 \\
\vdots \\
a_l x_{(\uu_{s_{\l}}, \uv_{s_{\l}})} & 0 &  \cdots & a_l y_{\uu_F} & 0 &  \cdots & 0\\
* & 0 & \cdots & 0 & b_1 & \cdots & 0 \\
\vdots \\
* & 0 & \cdots & 0 & 0 & \cdots & b_q \\
0 & \sgn (s_1)&  \cdots  & \sgn (s_{\l}) & 0 & \cdots & 0
%\sgn(t_1)e_1 & \cdots & \sgn (t_q)e_q
\end{array}
\right) (\bz),
}
\end{eqnarray}
which has full rank.\footnote{There are typos and small errors in \cite{Hu2025a}.
For example, $\widetilde\Ga_\fV$ is used in pp 152, but only introduced in pp 156 and
its application is justified in pp 157.
Likewise, in pp 129, Lemma 7.3 (3), $Z_{[k], \Ga}$ ($Z^\dagger_{[k], \Ga}$) should be
$Z_{\sF_{[k]}, \Ga}$ ($Z^\dagger_{\sF_{[k]}, \Ga}$).}
Clearly, all the variables used to compute  $J^*(\fG_{\fV,F}|_{\tGa_\fV})$ are pleasant.

$\bullet$ Case $(\beta)$ as in Theorem \ref{wp-ell-blowup} (2).

In this case, the chart $\fV$ lies over $(x_{(\uu_{s_0},\uv_{s_0})} \ne 0)$ for some
$s_0 \in S_F \- \{s_F\}$.

We  introduce the following maximal minor of 
$\Jac (\fG_{\fV,F}|_{\tGa_\fV})$
$$J^*(\fG_{\fV,F}|_{\tGa_\fV})= {{\partial(B_{\fV, s_0}|_{\tGa_\fV},
B_{\fV, s_1}|_{\tGa_\fV} \cdots B_{\fV, s_{\l}}|_{\tGa_\fV}), B_{\fV, t_1}|_{\tGa_\fV} \cdots 
B_{\fV, t_q}|_{\tGa_\fV}, L_{\fV, F}|_{\tGa_\fV})}
 \over {{\partial(  y_{ \uu_F},
x_{(\uu_{s_1},\uv_{s_1})} \cdots x_{(\uu_{s_{\l}},\uv_{s_l})}}},
x_{(\uu_{t_1},\uv_{t_1})} \cdots x_{(\uu_{t_q},\uv_{t_q})},
 y_{\fV, (\um,\uu_F)})}. $$

Then, one computes and finds that at the point $\bz$, it is equal to
\begin{eqnarray} \nonumber
% J^*(\cB^\gov_{\fV,F}, L_{\fV,F}|_{\tGa_\fV}) =   
% y_{\uu_F}, x_{(\uu_{s_1},\uv_{s_1})}\cdots x_{(\uu_{s_1},\uv_{s_1}   \\
\left(
\begin{array}{cccccccccc}
 a_0 & 0 & \cdots   & 0  & 0   & \cdots & 0 & 0\\
*  &  a_1  & \cdots &   0  & 0 & \cdots & 0 & 0\\
%a_2 x_{(\uu_{s_2}, \uv_{s_2})} & 0 & a_2 y_{\uu_F}  & \cdots & 0 \\
\vdots \\
*  & 0 & \cdots & a_l   & 0   & \cdots & 0 & 0       \\
*  & 0&  \cdots & 0 & b_1 & \cdots  &0     & 0\\
%*  &  b_1  & \cdots &   0  & 0 \\
%a_2 x_{(\uu_{s_2}, \uv_{s_2})} & 0 & a_2 y_{\uu_F}  & \cdots & 0 \\
\vdots \\
*  &  0 & \cdots & 0 & 0 & \cdots & b_l         & 0     \\
 0 & * & \cdots & *&  * & \cdots  & * &   \sgn (s_F) 
\end{array}
\right) (\bz).
\end{eqnarray}
Thus, we conclude that 
$J^*(\cB^\gov_{\fV,F}|_{\tGa_\fV})$ %\red L_{\fV,F}|_{\tGa_\fV})$ 
is a square matrix of full rank at $\bz$, and all the variables  used to compute it are pleasant.

Combining ($\alpha$) and ($\beta$), we obtain
the  maximal minor of  $\Jac (\fG_\fV|_{\tGa_\fV})$
% \begin{equation}\label{the-grand-matrix}  \red
%J^*(\fG_\fV|_{\tGa_\fV})=\left(
%\begin{array}{cccccccccc}
% J^*(\fG^\star_{\fV,F_1}|_{\tGa_\fV})  & 0 & \cdots & 0  \\
% * & J^*(\fG_{\fV,F_2}|_{\tGa_\fV}) &  0 & \cdots & 0  \\
%\vdots &    \\
% * &  \cdots & J^*(\fG^\star_{\fV, F_\up}|_{\tGa_\fV}) & 0\\
% * & \cdots & * & J^*(\fL^{\lt, \inc})
% \end{array}
%\right).
%\end{equation}
 \begin{equation}\label{the-grand-matrix}  
J^*(\fG_\fV|_{\tGa_\fV})=\left(
\begin{array}{cccccccccc}
 J^*(\fG_{\fV,F_1}|_{\tGa_\fV})  & 0 & 0& \cdots & 0  \\
 * & J^*(\fG_{\fV,F_2}|_{\tGa_\fV}) &  0 & \cdots & 0  \\
\vdots &    \\
 * &  * & * & \cdots & J^*(\fG_{\fV, F_\up}|_{\tGa_\fV}) \\
 \end{array}
\right)
\end{equation}
where
all the blocks along diagonal are invertible at $\bz$.

First, take $\Ga=\emptyset$, \eqref{the-grand-matrix} implies that
$\tsV_\ell$ is smooth, and is  defined  by 
$$L_{\fV,\sF}, \; \cB_\fV^\gov$$ over any chart 
$\fV$, implying  all the relations in $\cB^\ngv$ are dependent and can be discarded.

Then, \eqref{the-grand-matrix} implies that $\tZ_{\ell, \Ga}$ is smooth, as a local complete 
intersection over $\fV$, defined by $\tGa_\fV, L_{\fV, \sF}$, and $\cB_\fV^\gov$.
\end{proof}

%\begin{figure}
%\includegraphics[width=1\linewidth]{huanghelou.pdf}
%\end{figure}

\end{document}